\newcommand{\mytitle}{The inverse problem for orthotropic media in PS-OCT}
\title{The inverse scattering problem for orthotropic media in Polarization-sensitive Optical Coherence Tomography}
\author{P. Elbau$^1$\\{\footnotesize\href{mailto:peter.elbau@univie.ac.at}{peter.elbau@univie.ac.at}}
\and L. Mindrinos$^1$\\{\footnotesize\href{mailto:leonidas.mindrinos@univie.ac.at}{leonidas.mindrinos@univie.ac.at}}
\and O. Scherzer$^{1,2}$\\{\footnotesize\href{mailto:otmar.scherzer@univie.ac.at}{otmar.scherzer@univie.ac.at}}}
\titleformat{\section}{\filcenter\sc\large}{\thesection.\;}{0em}{}
\titleformat{\subsection}[runin]{\bf}{\thesubsection.\;}{0em}{}[.]
\footnotesize\sc{\mytitle}}%
\theoremstyle{break}
\newaliascnt{assumption}{lemma}
\newtheorem{assumption}[assumption]{Assumption}
\newaliascnt{theorem}{lemma}
\newtheorem{theorem}[theorem]{Theorem}
\newaliascnt{proposition}{lemma}
\newtheorem{proposition}[proposition]{Proposition}
\newaliascnt{corollary}{lemma}
\newaliascnt{invpro}{lemma}
\newaliascnt{definition}{lemma}
\newtheorem{definition}[definition]{Definition}
\newaliascnt{example}{lemma}
\newtheorem{example}[example]{Example}
\newaliascnt{remark}{lemma}
\newtheorem{remark}[remark]{Remark}
\theoremstyle{nonumberplain}
\newtheorem{proof}{Proof}
\newcommand{\R}{\mathbbm{R}}
\newcommand{\C}{\mathbbm{C}}
\renewcommand{\b}{\bm}
\newcommand{\gb}[1]{\ensuremath{\mbox{\boldmath $ #1 $}}} 
\newcommand{\tpsi}{\b{\tilde{\psi}}}
\newcommand{\e}{\mathrm e}
\renewcommand{\i}{\mathrm i}
\renewcommand{\d}{\,\mathrm d}
\newcommand{\id}{\mathbbm{1}}
\let\RE\Re
\let\Re=\undefined
\DeclareMathOperator{\Re}{\RE e}
\let\IM\Im
\let\Im=\undefined
\DeclareMathOperator{\Im}{\IM m}
\DeclareMathOperator{\curl}{\mathbf{curl}}
\let\div=\undefined
\DeclareMathOperator{\div}{div}
\DeclareMathOperator{\grad}{\mathbf{grad}}
\DeclareMathOperator{\supp}{supp}
\newcommand{\E}[1]{{\b E}^{#1}}
\newcommand{\FE}[1]{\widehat{\b E}^{#1}}
\newcommand{\FbE}{\widehat{\b E}}
\newcommand{\bE}{{\b E}}
\newcommand{\Einit}{{\b E}^{i}}
\newcommand{\FEinit}{\widehat{\b E}^{i}}
\newcommand{\greenkernel}{G}
\newcommand{\greentensor}{\b G}
\newcommand{\greenoperator}{\bm{\mathcal{G}}}
\newcommand{\greenoperatori}{\bm{\mathcal{G}}^\infty}
\newcommand{\bchi}{\gb \chi} 
\definecolor{darkgreen}{rgb}{0.0, 0.5, 0.0}
\begin{document}

\maketitle
\hspace*{1em}
\parbox[t]{0.49\textwidth}{\footnotesize
\hspace*{-1ex}$^1$Computational Science Center\\
University of Vienna\\
Oskar-Morgenstern-Platz 1\\
A-1090 Vienna, Austria}
\parbox[t]{0.4\textwidth}{\footnotesize
\hspace*{-1ex}$^2$Johann Radon Institute for Computational\\
and Applied Mathematics (RICAM)\\
Altenbergerstra{\ss}e 69\\
A-4040 Linz, Austria}

\vspace*{2em}

\begin{abstract}
In this paper we provide a mathematical model for imaging an anisotropic, orthotropic medium  with Polarization-Sensitive Optical Coherence Tomography (PS-OCT).  The imaging problem is formulated as an inverse scattering problem in three dimensions  for reconstructing the electrical susceptibility of the medium using Maxwell's equations. Our reconstruction method is based on the second-order Born-approximation of the electric field.
\end{abstract}

\section{Introduction}
Optical Coherence Tomography (OCT) is an imaging technique producing high-resolution images of the inner structure of biological tissues. 
Standard OCT uses broadband, continuous wave light for illumination and the images are obtained by measuring the time delay and the 
intensity of the backscattered light from the sample. 
For a detailed description of OCT systems we refer to the books \cite{BouTea02,DreFuj15} and for a mathematical modeling we refer to \cite{ElbMinSch15}.

Apart form standard OCT, there exist also functional OCT techniques such as the Polarization-Sensitive OCT (PS-OCT) which considers the 
differences in the polarization state of light to determine the optical properties of the sample. PS-OCT is based on Polarization-Sensitive Low Coherence Interferometry established by Hee 
\textit{et al.} \cite{HeeHuaSwaFuj92} and then first applied to produce two-dimensional OCT images  \cite{BoeMilGemNel97,BoeSriMalCheNel98}. In this work, we consider the basic scheme of a PS-OCT system which consists of a Michelson  interferometer with the addition of polarizers and quarter-wave plates (QWP).

More precisely, a linear polarizer is added after the source and the linear 
(horizontal or vertical) polarized light is split into two identical parts by a polarization-insensitive beam splitter (BS).
In the reference arm the light is reflected by a mirror and in the sample arm the light is incident on the medium. At the BS, the back-reflected beam and the backscattered light from the sample, in an arbitrary polarization state, are recombined. The recombined light passes 
through a polarizing BS which splits the output signal into its horizontal and vertical components to be measured at two different photo 
detectors. See \autoref{Fig1} for an illustration of this setup.

\begin{figure}
\begin{center}
\includegraphics[scale=0.8]{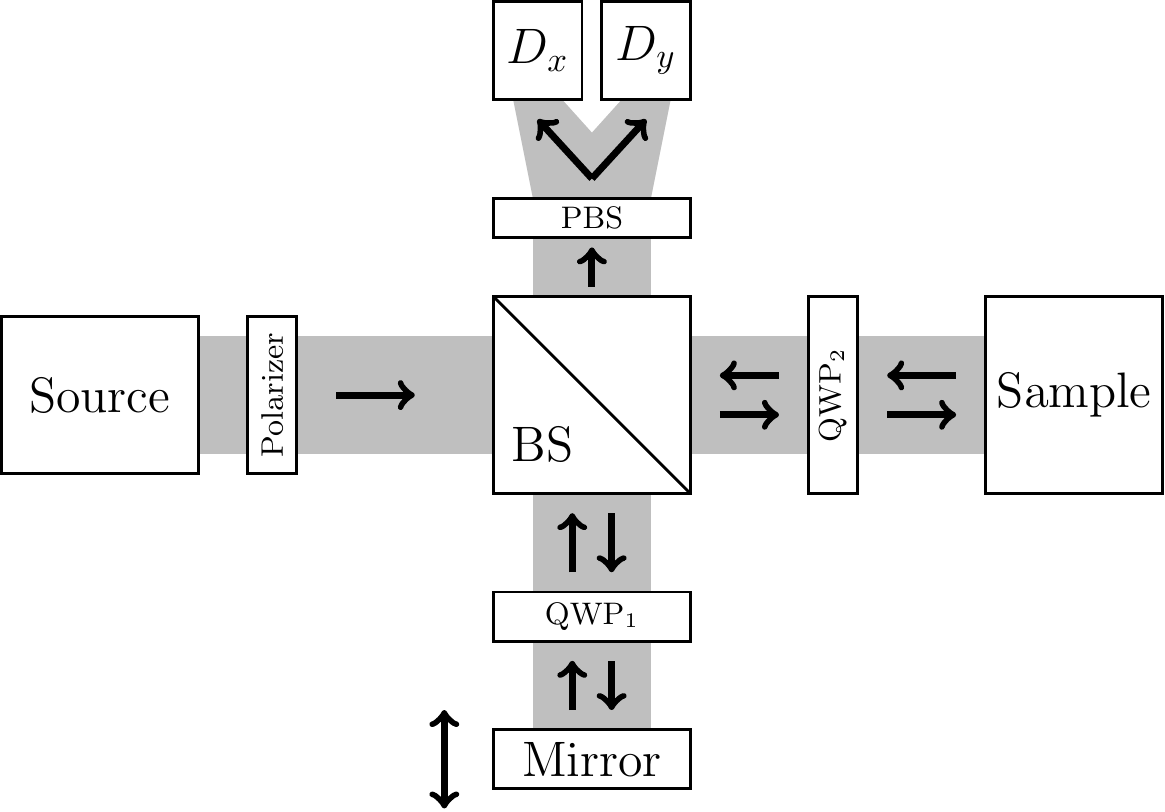}
\caption{Schematic representation of the light travelling in a time-domain PS-OCT system. In the reference and sample arms are placed quarter-wave plates (QWP) at specific orientations.}\label{Fig1}
\end{center}
\end{figure}

To describe the change in the polarization state of the light due to its propagation into the sample we adopt the analysis based on the theory 
of electromagnetic waves  scattered by anisotropic inhomogeneous media \cite{ColKre98, WolFol89}.  We assume that the dielectric medium is 
linear and anisotropic. In addition, we impose the property that the medium is invariant under reflection by the $x_1 - x_2$  plane. A medium with this property is called orthotropic in the  mathematical community \cite{CakCol14} or monoclinic in the material science community \cite{Tor02}.

The medium is also considered as weakly scattering and we present the solution in the accuracy of the second-order Born-approximation. As we are going to see later, we consider higher-order approximation in order to be able to recover all the material parameters. We describe the 
change in the polarization state of the light by the Jones matrix formalism which is applicable since OCT detects the coherent part of the 
electric field of the backscattered light \cite{JiaWan02}. As in standard OCT systems, the backscattered light is detected in the far 
field. 

In the medical community, the sample is usually described by a general retarder and the change in the polarization state of the light returning from the sample can be modelled by a Jones matrix \cite{HitGotStiPirFer01, JiaWan02}. However, even though the produced images are satisfactory they are mainly used qualitatively. The usage of these images comes only secondarily to quantify the optical parameters by image processing techniques.

In this work we are interested in the quantitative description of PS-OCT. To do so, we have first to describe mathematically the system properly. Thus, we represent  the polarized scattered field as solution to the full-wave Maxwell's equations. This has not yet been applied to PS-OCT, since for isotropic media, the Born-approximation decouples the effects of the optical properties of the sample from the polarization state of the scattered field. 
However, this analysis for anisotropic media provides enough information to reconstruct the electric susceptibility of the medium. The scattered field satisfies then an integral equation of Lippmann-Schwinger type. Under the far-field approximation and the assumption of a homogeneous background medium we obtain a system of integral equations for the unknown optical parameters.

In the mathematical literature, the scattering problem by anisotropic objects has been widely considered over the last decades 
\cite{BekUma89,GenWuLi03,GraUsl84,PapUzuCap90}. 
Recently, the connection between the inverse problem to reconstruct the refractive index and the interior transmission problem has been 
investigated \cite{CakColMonSun10, CakHad07}. For the specific case of an orthotropic medium we refer the reader to the book 
\cite{CakCol14} and to \cite{ColKreMon97, Pot99} for results concerning the uniqueness and existence of solutions of the inverse problem.

The paper is organized as follows: In \autoref{direct}, we derive the integral representation of the scattered field for an orthotropic medium in the accuracy of the second-order Born-approximation in the far-field zone. In \autoref{psoct}, we describe mathematically the standard PS-OCT system using the Jones matrix formalism and we derive the expression for the cross-spectral density. The system of equations for all the components of the susceptibility is presented in the last section using two incident illuminations.

\subsection*{Notation}
In this paper, we use the following conventions:
\begin{itemize}
 \item Let $f : \R \to \C$ be integrable, then the one-dimensional Fourier-transform is defined by   
       \[ \hat f(\omega) = \int_{\R} f(t)\e^{\i\omega t}\d t, \quad \text{ for all } \omega \in \R\;.\] 
 \item Let $f : \R \to \C$ be integrable, then the one-dimensional inverse Fourier-transform is defined by   
       \[ \check f(t) = \frac1{2\pi}\int_{\R} f(\omega)\e^{-\i \omega t}\d \omega, \quad  \text{ for all } t \in \R\;.\]
       \item Let $f : \R^3 \to \C$ be integrable, then the three-dimensional Fourier-transform is defined by   
       \[ \tilde f(\b k) = \int_{\R^3} f(\b x)\e^{-\i  \left<\b k,\b x\right>} \d \b x , \quad \text{ for all } \b x \in \R^3\;.\] 
\end{itemize}

\section{The direct scattering problem}\label{direct}
In absence of external charges and currents, the propagation of electromagnetic waves 
in a non-magnetic medium is mathematically described by Maxwell's equations relating the electric and magnetic 
fields $\bE: \R \times \R^3 \to \R^3$ and $\b H: \R \times \R^3 \to \R^3$ 
and the electric displacement $\b D: \R \times \R^3 \to \R^3$ by
\begin{subequations}\label{eqMaxwell}
\begin{align}
\div \b D(t,\b x) &= 0,\quad &&t\in\R,\;\b x\in\R^3, \label{eqMaxwellGauss}\\
\div \b H(t,\b x) &= 0,\quad &&t\in\R,\;\b x\in\R^3, \\
\curl \bE(t,\b x) &= -\frac1c\frac{\partial \b H}{\partial t}(t,\b x),\quad &&t\in\R,\;\b x\in\R^3, \label{eqMaxwellFaraday}\\
\curl \b H(t,\b x) &= \frac1c\frac{\partial \b D}{\partial t}(t,\b x),\quad &&t\in\R,\;\b x\in\R^3, \label{eqMaxwellAmpere}
\end{align}
\end{subequations}
where $c$ is the speed of light.  Maxwell's equations are not sufficient to uniquely determine the fields $\b D, \bE$ and $\b H$. Therefore additional material 
parameters have to be specified:
\begin{definition}\label{deDielectricum}
\begin{itemize}
 \item An anisotropic medium is called \emph{linear dielectric} if there exists a function, called the \emph{electric susceptibility},
       \begin{equation*}
          \bchi \in C^\infty_{\mathrm c}(\R\times\R^3 ; \R^{3\times 3}), \text{    with   } 
          \bchi(\tau,\b x)=0 \text{ for all } \tau<0, \b x\in\R^3,
       \end{equation*}
       satisfying
       \begin{equation}\label{eqDielectricum}
          \b D (t,\b x) = \bE (t, \b x) + \int_{\R} \bchi(\tau,\b x)\bE(t-\tau,\b x)\d\tau\;.
       \end{equation}
 \item A linear dielectric medium is called \emph{orthotropic} \cite{CakCol14,ColKreMon97} if 
	it admits the special symmetric form 
             \begin{equation}\label{eqortho}
                 \bchi = \begin{pmatrix}
                           \chi_{11} & \chi_{12} & 0 \\
                           \chi_{12} & \chi_{22} & 0 \\
                            0 & 0  & \chi_{33}
                          \end{pmatrix} .
             \end{equation}
\end{itemize}
\end{definition}

Application of the Fourier transform to Maxwell's equations \eqref{eqMaxwell} and taking into account 
\eqref{eqDielectricum}, it follows that the Fourier-transform $\FbE$ of $\bE$ satisfies the 
\emph{vector Helmholtz equation}
\begin{equation}\label{eqVectorHelmholtz}
\curl \curl \FbE (\omega,\b x) - \frac{\omega^2}{c^2}(\id+\hat{\bchi}(\omega,\b x)) \FbE(\omega, \b x) = 0,
\quad\omega\in\R,\;\b x\in\R^3.
\end{equation}

\begin{definition}\label{de:cif}
We call an electric field $\Einit:\R\times\R^3\to\R^3$ a \emph{causal initial field} (CIF) with respect to some domain 
$\Omega \subseteq \R^3$ if
\begin{enumerate}
 \item its Fourier transform with respect to time solves Maxwell's equations \eqref{eqMaxwell} with a susceptibility $\bchi = 0 $, that is,
       \begin{equation}\label{eqVectorHelmholtzIncident}
          \curl \curl \FEinit (\omega,\b x)- \frac{\omega^2}{c^2} \FEinit (\omega,\b x)= 0, \text{   and   } 
          \div \FEinit (\omega,\b x) = 0,\quad \omega\in\R,\;\b x\in\R^3,
       \end{equation}
 \item and satisfies $\supp \Einit (t,\cdot) \cap \Omega = \emptyset \text{ for every } t \le 0$. 
\end{enumerate}
\end{definition}
The second condition means that $\Einit$ does not interact with the medium contained in $\Omega$ until the time $t=0.$
\begin{example} \label{ex:E0}
Let $\Omega \subset \R^3$ be an open set, such that $\supp \bchi (t,\cdot) \subset \Omega$ for all $t \in\R.$  
Moreover, let $\b q\in\R^2\times\{0\}$ (denoting the polarization vector), $f \in C^\infty_{\mathrm c}(\R)$ and 
\begin{equation}\label{eq:ill}
 \E{0} (t,\b x)= \b q f(t+\tfrac{x_3}c),
\end{equation}
such that 
\[
\supp \E{0} (t,\cdot) \cap \Omega = \emptyset \text{ for every } t \le 0.
\]
Then $\E{0}$ is a CIF.
\end{example}
\begin{proof}
To see this note that for arbitrary $\b q \in \R^3$ we get
\begin{align*}
 \curl \curl \E{0} &= \curl \left( \frac1c f'(t+\tfrac{x_3}c) \,\b e_3 \times \b q\right) = \frac1{c^2} f''(t+\tfrac{x_3}c) \,\b e_3 \times (\b e_3 \times \b q ) \\
 &= -\frac1{c^2} f''(t+\tfrac{x_3}c) \b q =-\frac1{c^2} \partial_{tt} \E{0} .
\end{align*}
And for the particular choice $\b q\in\R^2\times\{0\}$ we even have that $\div \E{0}=0$. This shows that $\E{0}$ is a solution 
of Maxwell's equation. The second assertion is an immediate consequence of the second assumption.
\end{proof}

\begin{theorem}
Let $\Einit$ be a CIF-function and assume that the susceptibility $\bchi$ represents a dielectric, orthotropic medium. Then, 
\begin{enumerate}
\item there exists a solution $\bE$ (together with $\b H$) of Maxwell's equations~\eqref{eqMaxwell} which satisfies
      \begin{equation}\label{eqInitialCondition}
        \bE(t,\b x) = \Einit(t,\b x),\quad t \le 0,\;\b x\in\R^3.
      \end{equation}
\item For every $\b x \in \R^3$ the function 
      \begin{equation*}
      \begin{aligned}
         g: \R & \to \C\,,\\
        \omega &\mapsto (\FbE - \FEinit)(\omega,\b x),
      \end{aligned}
      \end{equation*}
      can be extended to a square integrable, holomorphic function on the upper half plane 
      $$\mathbbm{H}=\{ \omega\in \C \mid \Im{(\omega)} >0 \}.$$
\item $\FbE$ solves the \emph{Lippmann--Schwinger integral equation}
      \begin{equation}\label{eqLippmannSchwinger}
      \begin{aligned}
      \FbE(\omega,\b x) 
      &= \FEinit(\omega,\b x)+\left(\frac{\omega^2}{c^2} \id +\grad\div\right)\int_{\R^3}\greenkernel (\omega, \b x-\b y)\hat{\bchi}(\omega,\b y)\FbE(\omega,\b y)\d \b y\\
      &=: \FEinit(\omega,\b x) + \greenoperator [\hat{\bchi} \FbE](\omega,\b x)\,,
      \end{aligned}
      \end{equation}
      where       
      \[\greenkernel (\omega, \b x) = \frac{\e^{\i\frac\omega c| \b x|}}{4\pi|\b x|}, \quad \b x \neq 0, \, \omega \in \R\]
      is the fundamental solution of the scalar Helmholtz equation.
\end{enumerate}
\end{theorem}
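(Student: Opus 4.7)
My plan is to establish part (iii) first by a direct computation with the scalar Helmholtz Green's function, then use the resulting integral equation to obtain part (i) via the Fredholm alternative, and finally deduce part (ii) from the causality of the susceptibility.

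For (iii), I would subtract \eqref{eqVectorHelmholtzIncident} from \eqref{eqVectorHelmholtz} so that the scattered Fourier amplitude $\widehat{\b E}^s := \FbE - \FEinit$ satisfies $(\curl\curl - \tfrac{\omega^2}{c^2}\id)\widehat{\b E}^s = \tfrac{\omega^2}{c^2}\hat{\bchi}\FbE$. The heart of the calculation is the algebraic identity
\[
\bigl(\curl\curl - \tfrac{\omega^2}{c^2}\id\bigr)\bigl[(\tfrac{\omega^2}{c^2}\id+\grad\div)\b v\bigr] = -\tfrac{\omega^2}{c^2}\bigl(\Delta+\tfrac{\omega^2}{c^2}\bigr)\b v,
\]
which follows from $\curl\curl = \grad\div-\Delta$ and $\curl\grad = 0$. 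Choosing $\b v$ as the convolution of $\hat{\bchi}\FbE$ with the outgoing scalar fundamental solution $G$, which satisfies $(\Delta+\tfrac{\omega^2}{c^2})G(\omega,\cdot) = -\delta$, reproduces the source term and yields \eqref{eqLippmannSchwinger}.

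For (i), I would then treat \eqref{eqLippmannSchwinger} as the equation $(\id - \greenoperator[\hat{\bchi}\,\cdot\,])\FbE = \FEinit$ on a suitable Sobolev space over $\Omega$. Since $\hat{\bchi}(\omega,\cdot)$ is smooth and compactly supported and $G$ has only an integrable singularity, standard potential-theoretic estimates yield compactness of $\FbE\mapsto\greenoperator[\hat{\bchi}\FbE]$, modulo the delicate point that $\grad\div$ is not controlled on $L^2$ alone and requires a setting that captures $H(\curl)$-regularity. The Fredholm alternative then reduces solvability to uniqueness, which is the main obstacle: for orthotropic media uniqueness is linked to the interior transmission problem \cite{CakCol14,ColKreMon97} and requires excluding a discrete set of transmission eigenvalues. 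Once $\FbE$ has been constructed, $\bE$ is defined by inverse time-Fourier transform.

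For (ii), causality is decisive. Because $\bchi(\tau,\cdot)=0$ for $\tau<0$, its time-Fourier transform $\hat{\bchi}(\omega,\b x)$ extends holomorphically to $\mathbbm{H}$ with uniform bounds. Inserting this extension into \eqref{eqLippmannSchwinger} and using that $G(\omega,\b x)$ is likewise holomorphic in $\omega\in\mathbbm{H}$ (and in fact exponentially decaying there for $\b x\ne 0$) produces a holomorphic extension of $\widehat{\b E}^s(\cdot,\b x)$ to $\mathbbm{H}$. Square-integrability on the horizontal lines $\Im\omega=\eta\ge 0$ follows from the Paley--Wiener theorem once I know that $\bE - \Einit$ is supported in $t\ge 0$; this support property is a consequence of finite propagation speed together with the CIF assumption $\supp\Einit(t,\cdot)\cap\Omega=\emptyset$ for $t\le 0$, and it simultaneously furnishes the initial condition \eqref{eqInitialCondition} required in (i).
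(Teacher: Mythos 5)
Your architecture is more self-contained than the paper's, which disposes of (iii) by citing \cite{CakCol14,Pot00}, proves (ii) by the Paley--Wiener theorem exactly as you do, and handles (i) by remarking that $\FbE$ is characterized by the vector Helmholtz equation together with the holomorphy condition of (ii) --- that is, the paper selects the physical solution by causality, whereas you select it through the outgoing Green's function and a Fredholm argument. Your algebraic identity for (iii) is correct and is essentially the computation behind the cited references. Still, one step is missing there: you only show that $\FEinit+\greenoperator[\hat{\bchi}\FbE]$ and $\FbE$ satisfy the same inhomogeneous vector Helmholtz equation. To conclude that they coincide you must argue that their difference, a radiating (or causal) solution of the free equation, vanishes --- by invoking either the Silver--M\"uller radiation condition satisfied by the volume potential or the holomorphy characterization of (ii). This is precisely the step the paper hides in its citations, and it cannot be omitted.

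More seriously, your reduction of (i) to uniqueness misidentifies the obstacle. Interior transmission eigenvalues are irrelevant to the solvability of the \emph{direct} problem: they obstruct injectivity and denseness of the range of the far-field operator, i.e.\ the inverse problem treated in \cite{CakCol14,ColKreMon97}. Uniqueness for the direct scattering problem --- the homogeneous Lippmann--Schwinger equation having only the trivial solution --- holds for all real $\omega\neq0$ under the standing assumptions on $\hat{\bchi}$, by Rellich's lemma combined with unique continuation, and no discrete set of frequencies needs to be excluded. As written, your argument would leave existence unproved at these (nonexistent) exceptional frequencies. Relatedly, your worry about $\grad\div$ not being controlled on $L^2$ is legitimate, but fixing the function space (an $H(\curl)$-type or H\"older space as in \cite{Pot00}) is a prerequisite for applying Riesz--Fredholm theory, not an optional refinement. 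Your derivation of the support property of $\bE-\Einit$ from finite propagation speed and the CIF assumption, and the subsequent Paley--Wiener step for (ii), match the paper's argument.
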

The integral operator $\greenoperator$ is strongly singular and we address its properties in the last section.

\begin{proof}\mbox{}\\\vspace*{-2em} 

\begin{enumerate}
 \item[\em ii.] From the initial condition~\eqref{eqInitialCondition} it follows for every solution $\bE$ of Maxwell's equations~\eqref{eqMaxwell} which fulfills \eqref{eqInitialCondition} that the inverse Fourier-transform of $g$ satisfies 
\begin{equation*}
 \check{g}(t)=0 \text{ for all } t \le 0.
\end{equation*}
Thus, the result is a direct consequence of the Paley--Wiener theorem \cite{Pap62}.
 \item[\em i.] The electric field $\FbE$ is uniquely defined by \eqref{eqVectorHelmholtz} together with the assumption that the function $g$ can be for every $x\in\R^3$ extended to a square integrable, holomorphic function on the upper half plane.
\item[\em iii.] The solution of equation \eqref{eqVectorHelmholtz} can be written as the solution of the integral equation \eqref{eqLippmannSchwinger}, see \cite{CakCol14,Pot00}.
\end{enumerate}

\end{proof}

\subsection{Born and Far-field approximation}
We assume that the medium is weakly scattering, meaning that $\hat{\bchi}$ is sufficiently small \cite{Che90,ColKre98}
such that the incident field $\FEinit$ is significantly larger than $\FbE-\FEinit$. 

\begin{definition}
The \emph{first order} Born-approximation of the solution $\FbE$ of the Lippmann-Schwinger equation \eqref{eqLippmannSchwinger} 
is defined by 
\begin{equation}\label{eqBorn1}
\FE{1} = \FEinit + \greenoperator [\hat{\bchi} \FEinit ].
\end{equation}
The \emph{second order} Born-approximation is defined by 
\begin{equation}\label{eqBorn2}
\FE{2} = \FEinit + \greenoperator [\hat{\bchi} \FE{1} ].
\end{equation}
\end{definition}
Inserting \eqref{eqBorn1} into \eqref{eqBorn2} gives
\begin{equation}\label{eqBorn2com}
\FE{2} = \FEinit + \greenoperator [\hat{\bchi} \FEinit] + \greenoperator \left[\hat{\bchi}\greenoperator [\hat{\bchi} \FEinit] \right],
\end{equation}
or in coordinate writing
\begin{equation}\label{eqBorn2b}
\begin{aligned}
\FE{2}(\omega,\b x) &= \FEinit(\omega,\b x)+ 
\frac{\omega^2}{c^2}\int_{\R^3} \greentensor (\omega, \b x-\b y)\hat{\bchi}(\omega,\b y)\FEinit(\omega,\b y)\d \b y \\
&\phantom{=}+ \frac{\omega^4}{c^4}\int_{\R^3} \int_{\R^3} \greentensor (\omega, \b x-\b y)\hat{\bchi}(\omega,\b y) 
\greentensor (\omega, \b y-\b z)\hat{\bchi}(\omega,\b z) \FEinit(\omega,\b z)\d \b z\d \b y ,
\end{aligned}
\end{equation}
where now $\b G$ is the Green tensor of Maxwell's equations \cite{Had04, HazLen96}
\[
\b G (\omega , \b x - \b y) = G (\omega , \b x - \b y) \id + \frac{c^2}{\omega^2} \grad\div (G (\omega , \b x - \b y) \id).
\]

The physical meaning of the second order Born-approximation is that at a point $\b x$ the total field  $\FE{2}$ contains all single and double scattering events. 

In an OCT setup, the measurements are performed in a distance much bigger compared to the size of the sample. Thus, setting
$\b x = \rho \bm\vartheta, \rho>0$ and $\bm \vartheta \in \mathbb{S}^2,$  we can replace the above expression by its asymptotic 
behaviour for $\rho \to\infty,$ uniformly in $\bm\vartheta,$  see for instance \cite[Equation (4.1)]{ElbMinSch15}, resulting to 
\begin{equation}\label{eqFar}
\FE{2} (\omega,\rho\bm\vartheta) = \FEinit(\omega,\rho\bm\vartheta)+ \greenoperatori [\hat{\bchi} \FEinit ] (\omega,\rho\bm\vartheta) + 
\greenoperatori \left[\hat{\bchi}\greenoperator [\hat{\bchi} \FEinit ] \right] (\omega, \rho\bm\vartheta) .
\end{equation}
Here we have introduced the operator
\begin{equation}\label{operatorinf}
\greenoperatori [\b f ] (\omega , \rho \gb\vartheta ) := -\frac{\omega^2 \e^{\i\tfrac{\omega}c \rho}}{4\pi \rho c^2} \int_{\R^3} \bm\vartheta \times \left( \bm\vartheta \times \b f (\omega,\b y) \right) \e^{-\i\frac\omega c\left<\bm\vartheta ,\b y\right>} \d \b y,
\end{equation}
defined for functions $\b f : \R \times \R^3 \to \R^3$.


\section{Polarized-sensitive OCT}\label{psoct}
We describe the standard PS-OCT system in the context of a Michelson interferometer first presented by Hee \textit{et al.} \cite{HeeHuaSwaFuj92}. 

The detector array is given by $\mathcal D=\R^2\times\{d\}$ with $d>0$ sufficiently large. Moreover, we specify the CIF function to be $\E{0}$ as defined in \autoref{ex:E0} and we assume that $\E{0} (t,\b x)=0$ for $t\ge0$  and $\b x\in\mathcal D.$ 

We describe now the change in the polarization state of the light through the PS-OCT system. The effect of the polarization-insensitive beam splitter (BS) is not considered in this work since it only reduces the intensity of the beam by a constant factor. For simplicity, we place the sample and the mirror around the origin and the detector at the BS, for more details see \cite[Section 3.3]{ElbMinSch15}. The BS splits the light into two identical beams entering both arms of the interferometer.

\begin{figure}
\begin{center}
\includegraphics[scale=0.9]{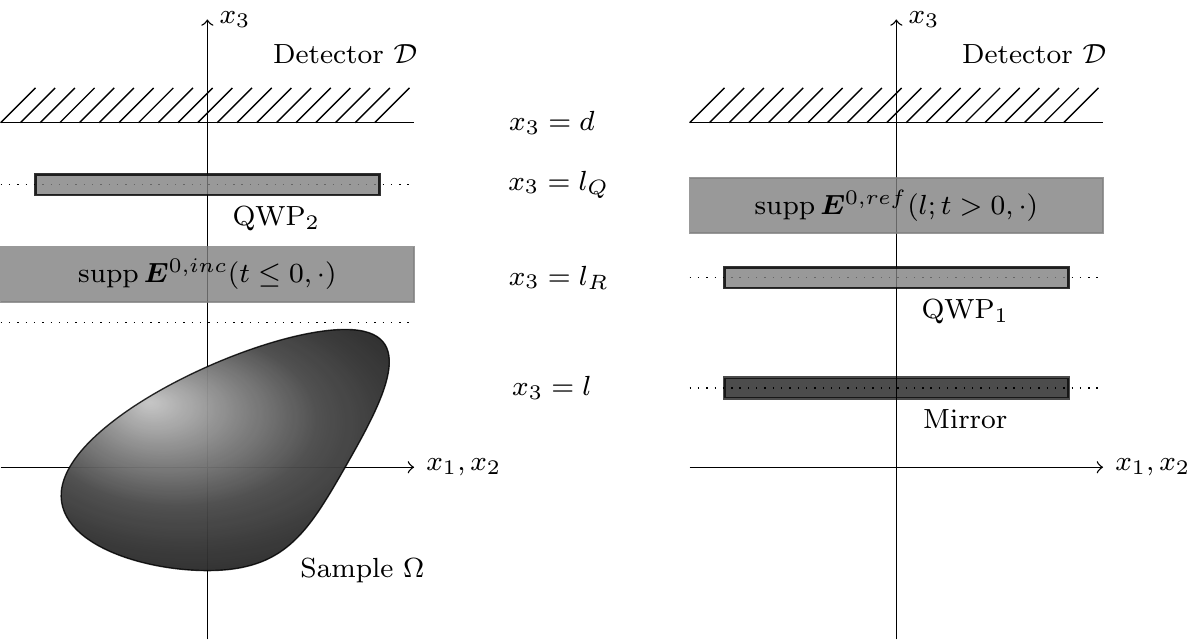}
\caption{The two scattering problems in PS-OCT. On the left picture the incoming light in the sample arm passes through a QWP and is incident on the medium. On the right picture, in the reference arm, the light is back-reflected by a mirror (passing twice a QWP).}\label{Fig2}
\end{center}
\end{figure}

\begin{description}
\item[Reference arm:] The light (at some negative time) passes through a zero-order quarter-wave plate (QWP) oriented at angle $\phi_1$ to the incident linear polarization. It is reflected by a perfect mirror placed in $x_3=l$ and then passes through the QWP again, at 
time $t = 0,$ see the right picture in \autoref{Fig2}. We formulate this process as a linear operator 
\begin{equation}\label{operator_ref}
\bm{\mathcal{J}}_l [\E{0} ] (t, \b x) = \E{0,ref}  (l;t,\b x),
\end{equation}
to be specified later. Then, the reference field 
$\bE^l$ takes the form
\begin{equation}\label{eqReflectedField}
 \bE^l(t,\b x) = \begin{cases}
    \E{0} (t,\b x) + \E{0,ref}  (l;t,\b x) , & \text{if } t> 0,\;\b x_3 > l_R ,\\
  \E{0}(t,\b x), & \text{if } t\le 0,\;\b x_3 > l_R .
  \end{cases}
\end{equation}

\item[Sample arm:] The incoming light passes through a QWP (oriented at a different angle $\phi_2 )$ at some time $t<0,$ placed in the plane given by the equation $x_3 = l_Q.$ This process results to a field 
\begin{equation}\label{operator_sam}
\bm{\mathcal{J}} [\E{0} ] (t, \b x) = \E{0,inc}  (t,\b x) ,
\end{equation}
that until $t = 0$ does not interact with the medium, see the left picture in \autoref{Fig2}.

\item[Detector:] The electric field $\bE$ which is obtained by illuminating the sample with the incident field 
$\E{0,inc}$ is combined with the reference field $\bE^l .$ We assume here that the backscattered light 
does not pass through the QWP again. At every point on the detector surface $\mathcal D$ we measure the two intensities \cite{ElbMinSch15}
\begin{equation*}
I_j(l,\b\xi)=\int_{0}^\infty E_j (t,\b \xi) E^l_j (t,\b \xi)\d t,\quad\b\xi\in\mathcal D,\;j\in\{1,2\}.
\end{equation*}
\end{description}

We assume that we do not measure the incident fields at the detector, meaning  $\E{0}(t,\b\xi)=\E{0,inc}(t,\b\xi)=0$ for $t> 0$ and $\b\xi\in\mathcal D$ and recalling \eqref{eqReflectedField} we obtain $
\b E^l-\b E^0 = 0$ for $t\leq 0,$ resulting to
\begin{align}\label{intensity}
 I_j(l,\bm\xi) &= \int_{0}^\infty(E_j- E^{0,inc}_j)(t,\bm\xi)(E^l_j -E^{0}_j)(t,\bm\xi)\d t 
\nonumber \\
&= \int_{\R}(E_j- E^{0,inc}_j)(t,\bm\xi)(E^l_j -E_j^{0})(t,\bm\xi)\d t .
\end{align}

We use Plancherel's theorem, and since $\bE \in \R^3$ it follows that $\FbE(-\omega ,\cdot) = \overline\FbE (\omega ,\cdot).$ Thus, the above formula can be rewritten as
\begin{equation}\label{data_freq}
\begin{aligned}
I_j(l,\bm\xi) &= 
\frac1{2\pi}\int_{\R}(\hat E_j-\hat E_j^{0,inc})(\omega,\bm\xi)(\overline{\hat E^l_j -\hat E_j^{0}})(\omega,\bm\xi)\d \omega \\
&= 
\frac1{2\pi}\int_{-\infty}^0 (\overline{\hat E_j-\hat E_j^{0,inc}})(-\omega,\bm\xi)(\hat E^l_j -\hat E_j^{0})(-\omega,\bm\xi)\d \omega \\
&\phantom{=}+\frac1{2\pi}\int_0^\infty (\hat E_j-\hat E_j^{0,inc})(\omega,\bm\xi)(\overline{\hat E^l_j -\hat E_j^{0}})(\omega,\bm\xi)\d \omega \\
&= \frac1{\pi} \Re \int_0^\infty (\hat E_j-\hat E_j^{0,inc})(\omega,\bm\xi)(\overline{\hat E^l_j -\hat E_j^{0}})(\omega,\bm\xi)\d \omega .
\end{aligned}
\end{equation}

\subsection{Jones Calculus} Here we describe the operators $\bm{\mathcal{J}}_l$ and $\bm{\mathcal{J}}$, introduced in \eqref{operator_ref} and \eqref{operator_sam}, respectively. We consider the fields in the frequency domain. Then, for positive frequencies we can apply the Jones matrix method (keeping also the zero third component of the fields) in order to model the effect of the QWP's on the polarization state of light. We assume that the properties of the QWP's are frequency independent and that the light is totally transmitted through the plate surface.

\begin{definition}
We define
\begin{equation}
\begin{aligned}
\bm{\mathcal{J}}_l [\b v ] (\omega, \b x) &=  \b J^2 (\phi_1 ) \b v  (\omega,\b x) \, \e^{\i\tfrac{\omega}c 2(x_3 - l)} ,  & \mbox{for } \omega > 0,\\
\bm{\mathcal{J}} [\b v ] (\omega, \b x) &= \b J (\phi_2) \b v  (\omega,\b x), & \mbox{for } \omega > 0,
\end{aligned}
\end{equation}
where 
\begin{equation*} 
 \b J (\phi)= \begin{pmatrix} 
\cos \phi & -\sin \phi & 0\\ 
\sin \phi & \phantom{-}\cos \phi & 0  \\
0 & 0 & 1
\end{pmatrix}  \begin{pmatrix} 
1 & \phantom{-}0 & 0 \\ 
0 & -\i & 0 \\
0 & \phantom{-}0 & 1
\end{pmatrix}  \begin{pmatrix} 
\phantom{-}\cos \phi & \sin \phi & 0\\ 
-\sin \phi & \cos \phi & 0  \\
0 & 0 & 1
\end{pmatrix},
\end{equation*}
is the rotated Jones matrix for a QWP with the fast axis oriented at angle $\phi$ \cite{GerBur75}.
\end{definition}

The above definition summarizes what we described before: In the reference arm, the incoming field passes through the QWP (at angle $\phi_1)$ is reflected by the mirror and then passes through the QWP again. The field travels additionally the distance $2(x_3 - l).$ In the sample arm, the field passes only through the QWP at angle $\phi_2 .$

We consider the PS-OCT system, presented first in \cite{HeeHuaSwaFuj92} and then considered in \cite{HitGotStiPirFer01, SchoColMaiSil98}, where $\phi_1 = \pi/8$ and $\phi_2 = \pi/4.$ Then, we obtain
\begin{equation}\label{reference}
\begin{aligned}
\FE{0,ref}  (l;\omega,\b x) &= \bm{\mathcal{J}}_l [\FE{0} ] (\omega, \b x) = \bm\eta \hat f(\omega) \,\e^{\i\tfrac{\omega}c (x_3 - 2l)},  & \mbox{for } \omega > 0,\\
\FE{0,inc}  (\omega,\b x) &= \bm{\mathcal{J}} [\FE{0} ] (\omega, \b x) =  \b p \hat f(\omega) \, \e^{-\i\tfrac{\omega}c x_3 } ,  & \mbox{for } \omega > 0,\\
\end{aligned}
\end{equation}
where $\bm \eta =  \b J^2 (\pi/8) \, \b q$ and $\b p = \b J (\pi/4) \, \b q.$ We observe that $\FE{0,ref} $ is still linearly polarized at angle $\pi/4$ with the linear (horizontal or vertical) initial polarization state and $\FE{0,inc}$ describes a circularly polarized light.

Now we can define our approximated data. We approximate in \eqref{data_freq} the term $\hat E_j-\hat E_j^{0,inc}$ by $\hat E^{2}_j-\hat E_j^{0,inc}$ and for the term $\hat E^l_j -\hat E_j^{0}$ we consider \eqref{eqReflectedField} and \eqref{reference}.

\begin{definition}
We call
 \begin{equation}\label{eqIntensity}
I_j^{2}(l,\bm\xi) = 
\frac{\eta_j}{\pi} \Re \int_0^\infty (\hat E^{2}_j-\hat E_j^{0,inc})(\omega,\bm\xi)
\hat f(-\omega) \e^{\i\tfrac{\omega}c (2l-\xi_3)}\d \omega .
\end{equation}
the second order approximated measurement data of OCT.
\end{definition}


\section{The inverse problem of recovering the susceptibility}
The problem we address here, is to recover $\hat{\bchi}$ from the knowledge of $\b I^2 (l,\bm\xi)$ for $l\in\R,$ $\bm\xi\in \mathcal D.$ First, we show that the measurements provide us with expressions which depend on $\hat{\bchi}$ non-linearly.

\begin{proposition}\label{propo1}
Let $\E{0}(t,\b x)$ be given by the form~\eqref{eq:ill} with $q_3=0$ and let the measurement data $I^2_j$ be given by \eqref{eqIntensity}. Then, for every $\omega\in\R_+\setminus\{0\}$ with $\hat f(\omega)\ne0$, the expression
\begin{equation}\label{eqProp}
\eta_j  \left[\greenoperatori \left[\hat{\bchi}\left( \b p \, \e^{-\i\frac\omega c  y_3} + \greenoperator [\hat{\bchi} \b p \, \e^{-\i\frac\omega c  z_3} ] \right)\right] \right]_j (\omega , \rho \gb\vartheta ) = \frac1{  c | \hat f(\omega)|^2 }\int_{\R} I^2_j( l,\rho \gb\vartheta)\e^{-\i\frac\omega c(2l-\rho \vartheta_3)}\d l 
\end{equation}
holds for all $j\in\{1,2\}$, and $\gb\vartheta\in \mathbb{S}^2_+ :=\{\gb\mu\in \mathbb{S}^2\mid \mu_3>0\}.$
\end{proposition}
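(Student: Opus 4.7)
The plan is to substitute the second-order Born approximation \eqref{eqFar} together with the explicit form of the incident sample-arm field $\FE{0,inc}$ from \eqref{reference} into the definition \eqref{eqIntensity} of $I_j^2$, and then to invert the resulting $l$-dependence by integrating against $\e^{-\i\omega(2l-\rho\vartheta_3)/c}$ in the variable $l$.

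First, because the Green operators $\greenoperatori$ and $\greenoperator$ are linear and $\FE{0,inc}(\omega,\b x)=\b p\,\hat f(\omega)\,\e^{-\i\omega x_3/c}$ depends on $\omega$ only through the scalar factor $\hat f(\omega)$ inside the $\b y$ and $\b z$ integrations, this factor can be pulled out of every operator. Hence $(\FE{2}-\FE{0,inc})(\omega,\rho\gb\vartheta)=\hat f(\omega)\,\b H(\omega,\rho\gb\vartheta)$, where $\b H(\omega,\rho\gb\vartheta)$ is precisely the bracketed vector-valued expression appearing on the left-hand side of \eqref{eqProp} (without the $\eta_j$ prefactor). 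Inserting this into \eqref{eqIntensity} and using that $f$ is real, so $\hat f(-\omega)=\overline{\hat f(\omega)}$, and that $\bm\eta\in\R^3$ (since $\b J^2(\pi/8)$ is the composition of two quarter-wave retarders, hence a real half-wave retarder), the product $\hat f(\omega)\hat f(-\omega)$ collapses into $|\hat f(\omega)|^2$.

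Next I would fix $\omega>0$ with $\hat f(\omega)\ne 0$, multiply both sides of the resulting identity by $\e^{-\i\omega(2l-\rho\vartheta_3)/c}$ and integrate over $l\in\R$. Writing $\Re z=(z+\bar z)/2$ splits the integral over positive frequencies $\omega'$ into a direct and a complex-conjugate piece. Exchanging the order of integration, justified by the rapid decay of $\hat f$ and the smoothness of $\b H$ in $\omega'$, the direct piece yields the inner integral $\int_\R\e^{\i(\omega'-\omega)(2l-\rho\vartheta_3)/c}\d l$, which after the substitution $u=2l-\rho\vartheta_3$ is a constant multiple of $\delta(\omega'-\omega)$. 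The conjugate piece instead produces $\delta(\omega'+\omega)$, which vanishes on $(0,\infty)$ because $\omega>0$. The surviving delta evaluates $|\hat f(\omega')|^2 H_j(\omega',\rho\gb\vartheta)$ at $\omega'=\omega$, and dividing by the resulting prefactor $c|\hat f(\omega)|^2$ isolates $\eta_j H_j(\omega,\rho\gb\vartheta)$ on the left-hand side, yielding \eqref{eqProp}.

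The step I would expect to be most error-prone is the bookkeeping of constants coming from the one-dimensional Fourier inversion together with the Jacobian $\d u=2\d l$ of the substitution $u=2l-\rho\vartheta_3$; everything else is essentially linear algebra, namely pulling scalars through the linear Green operators and identifying the surviving delta with the frequency one wishes to recover. The only more conceptual point is the Fubini interchange, which is justified by the $\rho^{-1}$ decay of the far-field operator $\greenoperatori$, the Schwartz-class decay of $\hat f$ (since $f\in C^\infty_{\mathrm c}(\R)$), and the compact support and smoothness of $\hat{\bchi}$ in $\b y$ and $\b z$.
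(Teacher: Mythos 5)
Your proposal is correct and follows essentially the same route as the paper: insert the second-order Born expression with $\FE{0,inc}=\b p\,\hat f(\omega)\e^{-\i\omega x_3/c}$ into \eqref{eqIntensity}, integrate against $\e^{-\i\omega(2l-\rho\vartheta_3)/c}$ in $l$ to produce $\delta(\omega'-\omega)$ and $\delta(\omega'+\omega)$ terms, discard the latter for $\omega>0$, and divide by $c\,\hat f(\omega)\hat f(-\omega)=c|\hat f(\omega)|^2$. The constant bookkeeping you flag does work out as stated, so no changes are needed.
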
 

\begin{proof}
We consider equation \eqref{eqFar} where now $\FE{i}$ is replaced by $\FE{0,inc}$ for $\omega >0$.  Then, we get

\begin{equation}\label{eqBornOper}
(\FE{2}-\FE{0,inc})(\omega,\rho \gb\vartheta) =  \hat f(\omega)\,   \greenoperatori \left[\hat{\bchi}\left( \b p \, \e^{-\i\frac\omega c  y_3} + \greenoperator [\hat{\bchi} \b p \, \e^{-\i\frac\omega c  z_3} ] \right)\right] (\omega, \rho\bm\vartheta) .
\end{equation}
We apply the inverse Fourier transform with respect to $l$ in \eqref{eqIntensity}, to obtain
\begin{equation}
\begin{aligned}
\int_\R I_j^{2}(l,\bm\xi) \e^{-\i \tfrac{\tilde\omega }{c}2l} \d l &= 
\frac{c\eta_j}{2}  \int_0^\infty (\hat E^{2}_j-\hat E_j^{0,inc})(\omega,\bm\xi)
\hat f(-\omega) \e^{-\i\tfrac{\omega}c \xi_3 } \delta (\omega -  \tilde{\omega})\d \omega \\
&\phantom{=}+ \frac{c\eta_j}{2}  \int_0^\infty \overline{(\hat E^{2}_j-\hat E_j^{0,inc})(\omega,\bm\xi)
\hat f(-\omega) \e^{-\i\tfrac{\omega}c \xi_3 }} \delta (\omega +  \tilde{\omega})\d \omega
\end{aligned}
\end{equation}
which for $\tilde{\omega}>0, \,\hat f \ne0$  and $\eta_j \ne 0,$ using that $\b E$ and $f$ are real valued, results to
\[
 (\hat E^{2}_j-\hat E_j^{0,inc})(\omega,\bm\xi) = \frac1{ \eta_j c \hat f(-\omega) }\int_{\R} I^2_j( l,\bm\xi )\e^{-\i\frac\omega c(2l-\xi_3)}\d l.
\]
This identity together with \eqref{eqBornOper}, results asymptotically to \eqref{eqProp}.
\end{proof}

We observe here that we want to reconstruct four four-dimensional functions  from two three-dimensional  measurement data. Thus, we have to consider some additional assumptions on the medium in order to cancel out the lack of dimensions and handle the non-linearity of \eqref{eqProp} with respect to $\hat{\bchi}.$

\begin{assumption}\label{as1}
Specific illumination:  The support of the initial pulse is small enough such that the optical parameters in this spectrum can be assumed constant with respect to frequency. 

Medium: The susceptibility can be decomposed into two parts, a background susceptibility which is constant and assumed to be  known and a part that counts for the local variations of the susceptibility and can be seen as deviation from the constant value. 

Then, the expression \eqref{eqortho} admits the special form
\[
\hat{\bchi} (\omega , \b x)= \bchi_0 + \epsilon \,\gb\psi (\b x),
\]
where
\[
\bchi_0 = \chi_0 \begin{pmatrix}
    1 & 1 & 0 \\
    1 & 1 & 0 \\
    0 & 0  & 1
\end{pmatrix}, \quad \mbox{and} \quad \gb\psi = \begin{pmatrix}
    \psi_{11} & \psi_{12} & 0 \\
    \psi_{12} & \psi_{22} & 0 \\
    0 & 0  & \psi_{33}
\end{pmatrix},
\]
for some known $\chi_0 \in\R$, a small parameter $\epsilon >0$ and $\psi_{ij} \in C^\infty_{\mathrm c}(\R^3 ; \C).$
\end{assumption}

In the following, we consider this type of media, which are typical for biological tissues, and we assume in addition that the behavior of the homogeneous medium ($\epsilon = 0$) is known. Then, as a consequence, also the measured data from PS-OCT are known, let us call them $I_0$, and we can assume the following form for the measurements
\begin{equation}\label{measurements2}
I^2_j(l,\bm\xi) = I_0 + \epsilon M_j(l,\bm\xi), \quad l \in \R, \, \bm\xi \in \mathcal{D}, \, j\in \{1,2\} .
\end{equation}
for some known functions $M_j.$ 

\begin{proposition}
Let the assumptions of \autoref{propo1} and the additional \autoref{as1}
hold. We define $\b v= \tfrac\omega c (\bm\vartheta +\b e_3),\, \gb\vartheta\in \mathbb{S}^2_+ .$ Then, the spatial Fourier transform of the matrix-valued function $\bm\psi : \R^3 \rightarrow \C^{3\times 3},$ satisfies the equations
\begin{equation}\label{eqFinal}
\eta_j \left[ \bm\vartheta \times \left( \bm\vartheta \times \left( \left( \tpsi (\b v) + \bchi_0 \bm{\mathcal{K}} [\tpsi ] (\b v) + \bm{\mathcal{K}}^\dagger [\tpsi ] (\b v) \bchi_0 \right) \b p \right)\right) \right]_j = \tilde m_j (\b v ) , \quad j\in\{1,2\} ,
\end{equation}
where 
\begin{equation}\label{rightside}
\tilde m_j (\b v ) := m_j (\omega , \bm\vartheta ) = -\frac{4\pi\rho c}{  \omega^2| \hat f(\omega)|^2 }\int_{\R} M_j( l,\rho \gb\vartheta)\e^{-\i\frac\omega c(2l-\rho (\vartheta_3-1))}\d l .
\end{equation}
The operators $\bm{\mathcal{K}}$ and $\bm{\mathcal{K}}^\dagger$ are defined by
\begin{equation}\label{Operators}
\bm{\mathcal{K}} [\b f ] (\b v) : =  \int_{\R^3} \b K^{\b z} (\b v; \b k)  \b f (\b k) \d \b k , \quad \bm{\mathcal{K}}^\dagger [\b f ] (\b v) : =  \int_{\R^3} \b f (\b k) \b K^{\b y} (\b v; \b k)   \d \b k .
\end{equation}
for functions $\b f : \R^3 \rightarrow \C^{3\times 3},$ with kernels
\begin{align*}
\b K^{\bm\alpha} (\tfrac\omega c (\bm\vartheta +\b e_3); \b k) = \frac{\omega^2 }{c^2(2\pi)^3} \int_{\Omega}\int_{\Omega}  \greentensor (\omega, \b y-\b z)\e^{-\i\frac\omega c (z_3 + \left<\bm\vartheta,\b y\right>)} \e^{\i\left<\b k,\bm\alpha\right>} \d \b z\d \b y, \quad \bm\alpha = \b z, \b y.
\end{align*}
\end{proposition}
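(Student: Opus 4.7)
The plan is to linearize the identity of \autoref{propo1} in the small parameter $\epsilon$ from \autoref{as1} and to match the coefficient of $\epsilon$ on both sides, so that the resulting equation involves $\gb\psi$ linearly and, after spatial Fourier transform, involves $\tpsi$. First I would substitute $\hat{\bchi}=\bchi_0+\epsilon\,\gb\psi$ into both occurrences of $\hat{\bchi}$ in the left-hand side of \eqref{eqProp} and expand in powers of $\epsilon$. The $O(\epsilon^0)$ part depends only on the constant background $\bchi_0$; by \autoref{propo1} applied at $\epsilon=0$ it equals $\tfrac{1}{c|\hat f(\omega)|^2}\int_\R I_0\,\e^{-\i\tfrac\omega c(2l-\rho\vartheta_3)}\d l$, which cancels against the $I_0$ term in the splitting \eqref{measurements2} on the right-hand side. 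Dropping the $O(\epsilon^2)$ term (consistent with the weak-scattering regime underlying the second-order Born approximation) leaves three contributions inside $\greenoperatori[\,\cdot\,]$, namely
\[
\gb\psi(\b y)\,\b p\,\e^{-\i\tfrac\omega c y_3}
+\bchi_0\,\greenoperator\bigl[\gb\psi\,\b p\,\e^{-\i\tfrac\omega c z_3}\bigr](\omega,\b y)
+\gb\psi(\b y)\,\greenoperator\bigl[\bchi_0\,\b p\,\e^{-\i\tfrac\omega c z_3}\bigr](\omega,\b y),
\]
whose image under $\eta_j[\greenoperatori[\,\cdot\,]]_j$ must agree with $\tfrac{1}{c|\hat f(\omega)|^2}\int_\R M_j(l,\rho\gb\vartheta)\,\e^{-\i\tfrac\omega c(2l-\rho\vartheta_3)}\d l$.

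Next I would evaluate the far-field operator \eqref{operatorinf} on each of the three summands, using $\greenoperator[\b f](\omega,\b y)=\tfrac{\omega^2}{c^2}\int\greentensor(\omega,\b y-\b z)\b f(\omega,\b z)\d\b z$ as read off \eqref{eqBorn2b}. For the first summand the exponentials merge into $\e^{-\i\left<\b v,\b y\right>}$ with $\b v=\tfrac{\omega}{c}(\bm\vartheta+\b e_3)$, so the $\b y$-integral is exactly the spatial Fourier transform $\tpsi(\b v)$ applied (as a matrix) to $\b p$. For the two remaining summands I would write $\gb\psi$ as its inverse Fourier transform $\gb\psi(\bm\alpha)=(2\pi)^{-3}\int\tpsi(\b k)\,\e^{\i\left<\b k,\bm\alpha\right>}\d\b k$, interchange the order of the $\b k$-, $\b y$- and $\b z$-integrals, and identify the remaining $(\b y,\b z)$-integral with the kernel $\b K^{\bm\alpha}(\b v;\b k)$ of \eqref{Operators}. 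The choice $\bm\alpha=\b z$ is forced precisely when $\gb\psi$ sits inside $\greenoperator$ (with $\bchi_0$ standing to its left), producing $\bchi_0\,\bm{\mathcal{K}}[\tpsi](\b v)\b p$; the choice $\bm\alpha=\b y$ is forced when $\gb\psi$ sits outside $\greenoperator$ (with $\bchi_0$ appearing inside the integrand on the right of $\tpsi(\b k)$), producing $\bm{\mathcal{K}}^\dagger[\tpsi](\b v)\,\bchi_0\b p$.

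Finally I would pull out the common prefactor $-\tfrac{\omega^2\e^{\i\omega\rho/c}}{4\pi\rho c^2}$ from $\greenoperatori$, take the $j$-th component with the polarization weight $\eta_j$, and transfer all $\omega$- and $\rho$-dependent factors to the right-hand side; checking that the phase $\e^{\i\omega\rho/c}$ combines with the exponent $\e^{-\i\tfrac\omega c(2l-\rho\vartheta_3)}$ to produce the exponent $\e^{-\i\tfrac\omega c(2l-\rho(\vartheta_3-1))}$ of \eqref{rightside} reproduces exactly the formula for $\tilde m_j(\b v)$, establishing \eqref{eqFinal}. The main delicate point I expect is the simultaneous bookkeeping of the matrix multiplication by $\bchi_0$ (on the left versus on the right) together with the correct choice $\bm\alpha\in\{\b z,\b y\}$ in the kernel: swapping the Fourier expansion between the inner and outer integration variable interchanges the roles of $\bm{\mathcal{K}}$ and $\bm{\mathcal{K}}^\dagger$; beyond this the computation is a careful but routine manipulation of integrals and exponentials.
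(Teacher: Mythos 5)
Your proposal follows essentially the same route as the paper's proof: substitute $\hat{\bchi}=\bchi_0+\epsilon\,\gb\psi$ and the measurement splitting into the identity of the previous proposition, equate the first-order terms in $\epsilon$ (the three contributions you list are exactly those in the paper), expand $\gb\psi$ via its spatial Fourier transform, interchange integrals to identify the kernels $\b K^{\b z}$ and $\b K^{\b y}$, and absorb the far-field prefactor and phase into $\tilde m_j$. Your bookkeeping of the left/right placement of $\bchi_0$ versus the choice $\bm\alpha\in\{\b z,\b y\}$ and the phase check $2l-\rho\vartheta_3+\rho=2l-\rho(\vartheta_3-1)$ both match the paper, so the argument is correct and not materially different.
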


\begin{proof}
We substitute $\hat{\bchi},$ considering \autoref{as1}, and \eqref{measurements2} in  \eqref{eqProp}  and we equate the first order terms $\bm\psi$ and $\b M$ to obtain
\begin{multline}\label{eqPert1}
\eta_j  \left[\greenoperatori \left[\bm\psi \left( \b p \, \e^{-\i\frac\omega c  y_3} + \greenoperator \left[ \bchi_0 \b p  \e^{-\i\frac\omega c z_3}\right]\right) \right] \right]_j (\omega , \rho \gb\vartheta ) 
+ \eta_j  \left[\greenoperatori \left[\bchi_0  \greenoperator \left[ \bm\psi \b p \, \e^{-\i\frac\omega c z_3 }\right] \right] \right]_j (\omega , \rho \gb\vartheta ) \\
=  \frac1{  c | \hat f(\omega)|^2 }\int_{\R} M_j( l,\rho \gb\vartheta)\e^{-\i\frac\omega c(2l-\rho \vartheta_3)}\d l  .
\end{multline}

In order to analyse the left hand side of the above equation we consider the definition \eqref{operatorinf} and the analytic form \eqref{eqBorn2b}. Then, we  rewrite \eqref{eqPert1} as
 \begin{multline*}
\eta_j \left[ \int_{\Omega} \bm\vartheta \times \left( \bm\vartheta \times \left( \gb\psi (\b y)\b p\right) \right) \e^{-\i\frac\omega c\left<\bm\vartheta +\b e_3,\b y\right>} \d \b y \right.\\
+
\frac{\omega^2 }{c^2} \int_{\Omega} \int_{\Omega} \bm\vartheta \times \left( \bm\vartheta \times \left( \bchi_0 \greentensor (\omega, \b y-\b z)\gb\psi (\b z) \b p \right) \right)\e^{-\i\frac\omega c (z_3 + \left<\bm\vartheta,\b y\right>)} \d \b z\d \b y  \\
+ \left.\frac{\omega^2 }{c^2} \int_{\Omega} \int_{\Omega} \bm\vartheta \times \left( \bm\vartheta \times \left( \gb\psi (\b y) \greentensor (\omega, \b y-\b z)\bchi_0 \b p\right) \right)\e^{-\i\frac\omega c (z_3 + \left<\bm\vartheta,\b y\right>)} \d \b z\d \b y \right]_j 
=  m_j (\omega,\bm\vartheta ),
\end{multline*}
where $m_j$ is given by \eqref{rightside}. Taking the Fourier transform of $\gb\psi$ with respect to space, we get
\begin{multline}\label{eqPert3}
\eta_j \left[ \bm\vartheta \times \left( \bm\vartheta \times \left( \tpsi (\tfrac\omega c (\bm\vartheta +\b e_3)) \b p \right)\right)\right.\\
+
\frac{\omega^2 }{c^2(2\pi)^3} \int_{\R^3}\int_{\Omega} \int_{\Omega} \bm\vartheta \times \left( \bm\vartheta \times \left( \bchi_0 \greentensor (\omega, \b y-\b z) \tpsi (\b k) \b p\right) \right)\e^{-\i\frac\omega c (z_3 + \left<\bm\vartheta,\b y\right>)}\e^{\i\left<\b k,\b z\right>} \d \b z\d \b y \d \b k \\
+ \left.\frac{\omega^2 }{c^2(2\pi)^3} \int_{\R^3}\int_{\Omega} \int_{\Omega} \bm\vartheta \times \left( \bm\vartheta \times \left( \tpsi (\b k) \greentensor (\omega, \b y-\b z) \bchi_0 \b p\right) \right)\e^{-\i\frac\omega c (z_3 + \left<\bm\vartheta,\b y\right>)} \e^{\i\left<\b k,\b y\right>} \d \b z\d \b y \d \b k \right]_j\\
= m_j (\omega,\bm\vartheta ) .
\end{multline}
This equation for $\bm{\tilde m} (\b v ) := \b m (\omega,\bm\vartheta ),$ using the definitions of the integral operators \eqref{Operators} admits the compact form \eqref{eqFinal}.
\end{proof}

Now, we are in position to formulate the inverse problem: Recover from the expressions
\[
\eta_j \left[ \bm\vartheta \times \left( \bm\vartheta \times \left( \left( \tpsi (\b v) + \bchi_0 \bm{\mathcal{K}} [\tpsi ] (\b v) + \bm{\mathcal{K}}^\dagger [\tpsi ] (\b v) \bchi_0 \right) \b p \right)\right)\right]_j , \quad j\in \{1,2\},
\]
the matrix-valued function $\bm\psi : \Omega \rightarrow \C^{3\times 3},$ if we assume that we have measurements for every incident polarization.

Let us now specify the polarization vectors $\bm\eta$ and $\b p.$ We choose two different incident polarization vectors $\b q^{(1)} = \b e_1$ and $\b q^{(2)} = \b e_2 ,$  and using the formulas \eqref{reference} we obtain the vectors
\begin{equation}\label{eqIncident2}
\begin{aligned}
\bm\eta^{(1)} &= \frac{\sqrt{2}}2 \begin{pmatrix} 1 \\1  \\0 \end{pmatrix} , & \b p^{(1)} &= \frac{1}2 \begin{pmatrix} 1 -\i\\ 1 +\i  \\ 0 \end{pmatrix}, \\
\bm\eta^{(2)} &= \frac{\sqrt{2}}2 \begin{pmatrix} \phantom{-}1 \\ -1  \\ \phantom{-}0 \end{pmatrix} , & \b p^{(2)} &= \frac{1}2 \begin{pmatrix} 1 +\i\\ 1 -\i \\ 0  \end{pmatrix}.
\end{aligned}
\end{equation}

\begin{remark}
To find, for instance, the form of the incident wave $\b p^{(1)} \hat f(\omega) \, \e^{-\i\tfrac{\omega}c x_3 },$ for $\omega >0,$ in the time domain we have to extend it for negative frequencies and consider its inverse Fourier transform. Then, we have
\begin{equation*}
\begin{aligned}
\b E^{(1)} (t,\b x) &=: \frac1{2\pi} \int_0^\infty \b p^{(1)} \hat f(\omega) \, \e^{-\i\tfrac{\omega}c x_3 } \e^{-\i \omega t} \d \omega +\frac1{2\pi} \int_{-\infty}^0 \overline{\b p^{(1)}} \hat f(\omega) \, \e^{-\i\tfrac{\omega}c x_3 } \e^{-\i \omega t} \d \omega \\
&= \frac1{2\pi} \int_0^\infty \b p^{(1)} \hat f(\omega) \, \e^{-\i\tfrac{\omega}c x_3 } \e^{-\i \omega t} \d \omega +\frac1{2\pi} \int_0^{\infty} \overline{\b p^{(1)} \hat f(\omega) \, \e^{-\i\tfrac{\omega}c x_3 } \e^{-\i \omega t}} \d \omega \\
&=\frac1{\pi} \Re \int_0^\infty \b p^{(1)} \hat f(\omega) \, \e^{-\i\tfrac{\omega}c x_3 } \e^{-\i \omega t} \d \omega
\end{aligned}
\end{equation*}
If the small spectrum is centered around a frequency $\nu,$ we approximate $\hat{f}(\omega) \simeq \delta (\omega - \nu),$ for $\omega >0,$ to obtain
\begin{equation*}
\begin{aligned}
\b E^{(1)} (t,\b x) &= \frac1{\pi} \Re  \left\{\b p^{(1)}  \e^{-\i \nu (\tfrac{x_3}{c}+t) } \right\} \\
&= \frac1{2\pi} \begin{pmatrix} \cos (\nu (\tfrac{x_3}{c}+t)) - \sin (\nu (\tfrac{x_3}{c}+t)) \\ \cos (\nu (\tfrac{x_3}{c}+t)) + \sin (\nu (\tfrac{x_3}{c}+t))  \\0 \end{pmatrix} \\
&= \frac1{\sqrt{2}\pi} \begin{pmatrix} \cos (\tfrac{\pi}4 +\nu (\tfrac{x_3}{c}+t))  \\ \sin (\tfrac{\pi}4 +\nu (\tfrac{x_3}{c}+t)) \\0 \end{pmatrix}.
\end{aligned} 
\end{equation*}
We see that $\b E^{(1)}$ describes also a circularly polarized wave with a phase shift. 

\end{remark}

If we neglect the zeroth third components, we observe that $\bm\eta^{(1)},\bm\eta^{(2)}  \in \R^2$ and $\b p^{(1)}, \b p^{(2)} \in \C^2$ form a basis in $\R^2$ and $\C^2,$ respectively. The following result shows that measurements for additional polarization vectors $\b q$ do not provide any further information.

\begin{proposition}\label{thReconstructableMatrixEntries}
Let $\bm\vartheta\in \mathbb{S}^2_+$ be fixed and $\b q = \b q^{(1)} , \, \b q^{(2)} .$ Then, the equation \eqref{eqFinal} is equivalent to the system of equations
\begin{equation}\label{eqMeasurementsPolarisationReduced}
\begin{aligned}
[\b P_{\bm\vartheta} \b Y \b p^{(1)}]_1 &= b^{(1)}_1 ,\qquad &
[\b P_{\bm\vartheta} \b Y \b p^{(1)}]_2 &= b^{(1)}_2 ,\\
[\b P_{\bm\vartheta} \b Y \b p^{(2)}]_1 &= b^{(2)}_1 ,\qquad & 
[\b P_{\bm\vartheta} \b Y \b p^{(2)}]_2 &= -b^{(2)}_2  ,
\end{aligned}
\end{equation}
where $\b Y := \tpsi (\b v) + \bchi_0 \bm{\mathcal{K}} [\tpsi ] (\b v) + \bm{\mathcal{K}}^\dagger [\tpsi ] (\b v) \bchi_0,$ $b^{(k)}_j := -\sqrt{2} \tilde m^{(k)}_j  , \, k,j=1,2,$ and $\b P_{\bm\vartheta}$ denotes the orthogonal projection in direction $\bm\vartheta$. The upper index on the data counts for the different incident polarisations.
\end{proposition}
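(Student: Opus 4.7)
The plan is to reduce \eqref{eqFinal} to a componentwise linear identity by collapsing the double cross product via the BAC--CAB rule, and then to insert the two explicit polarization pairs $(\bm\eta^{(k)},\b p^{(k)})$ from \eqref{eqIncident2} to obtain the four scalar equations \eqref{eqMeasurementsPolarisationReduced}. No analytic input beyond \eqref{eqFinal} is needed; the proof is an exercise in linear algebra and sign bookkeeping.

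First I would apply the vector triple product identity. Since $\bm\vartheta\in\mathbb S^2$, for every $\b a\in\C^3$ one has
\[
\bm\vartheta\times(\bm\vartheta\times\b a)=(\bm\vartheta\cdot\b a)\bm\vartheta-\b a=-\b P_{\bm\vartheta}\b a,
\]
where $\b P_{\bm\vartheta}$ denotes the orthogonal projection onto the plane perpendicular to $\bm\vartheta$ (which is the intended meaning in the statement, since otherwise the identity cannot equate a $3$-vector to its scalar projection). Applying this to $\b a=\b Y\b p$ rewrites \eqref{eqFinal} as
\[
\eta_j\,[\b P_{\bm\vartheta}\b Y\b p]_j=-\tilde m_j\qquad(j\in\{1,2\}).
\]

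Next I would substitute the two incident polarizations. For $\b q^{(1)}=\b e_1$ the vector $\bm\eta^{(1)}=\tfrac{\sqrt2}{2}(1,1,0)^T$ has $\eta_1^{(1)}=\eta_2^{(1)}=\tfrac{\sqrt2}{2}$, so dividing through by this common factor and using the definition $b_j^{(k)}:=-\sqrt2\,\tilde m_j^{(k)}$ yields the first two equations of \eqref{eqMeasurementsPolarisationReduced}. For $\b q^{(2)}=\b e_2$ the vector $\bm\eta^{(2)}=\tfrac{\sqrt2}{2}(1,-1,0)^T$ introduces a sign flip in the second component: $\eta_1^{(2)}=\tfrac{\sqrt2}{2}$ gives $[\b P_{\bm\vartheta}\b Y\b p^{(2)}]_1=b_1^{(2)}$, whereas $\eta_2^{(2)}=-\tfrac{\sqrt2}{2}$ produces the relation $[\b P_{\bm\vartheta}\b Y\b p^{(2)}]_2=\sqrt2\,\tilde m_2^{(2)}=-b_2^{(2)}$. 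The converse implication is obtained by running the same algebra backwards, multiplying each scalar equation by the appropriate $\pm\tfrac{\sqrt2}{2}$ and reassembling \eqref{eqFinal} componentwise.

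I do not anticipate any serious obstacle: the BAC--CAB reduction is one line, and the only delicate point is tracking the sign introduced by the second entry of $\bm\eta^{(2)}$, which is exactly what accounts for the asymmetric $-b_2^{(2)}$ on the right-hand side of the last equation in \eqref{eqMeasurementsPolarisationReduced}. It is also worth remarking, en passant, that \eqref{eqFinal} is stated only for $j\in\{1,2\}$---consistent with the two photodetectors measuring only the horizontal and vertical components---so no third scalar equation is lost in this reduction.
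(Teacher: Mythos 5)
Your proposal is correct and follows essentially the same route as the paper: the reduction $\bm\vartheta\times(\bm\vartheta\times \b Y \b p)=-\b P_{\bm\vartheta}\b Y\b p$ followed by insertion of the explicit $\bm\eta^{(k)}$ from \eqref{eqIncident2}, with the sign flip from $\eta^{(2)}_2=-\tfrac{\sqrt2}{2}$ producing the $-b^{(2)}_2$. The only thing you omit is the paper's auxiliary computation showing that an arbitrary polarisation $\b q=c_1\b q^{(1)}+c_2\b q^{(2)}$ yields only linear combinations of the four data $\tilde m^{(k)}_j$; this supports the surrounding discussion that further polarisations add no information, but it is not needed for the equivalence as literally stated.
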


\begin{proof}
The system of equations \eqref{eqFinal} for $(\b q,j)\in\{(\b q^{(1)} ,1),(\b q^{(1)} ,2),(\b q^{(2)},1),(\b q^{(2)},2)\}$ is equivalent to the four equations
\begin{equation}\label{eqFixed}
\begin{aligned}
\eta^{(1)}_1 [\bm\vartheta\times(\bm\vartheta\times \b Y \b p^{(1)})]_1 &=  \tilde m^{(1)}_1 ,\qquad &
\eta^{(1)}_2 [\bm\vartheta\times(\bm\vartheta\times \b Y \b p^{(1)})]_2 &=  \tilde m^{(1)}_2 ,\\
\eta^{(2)}_1 [\bm\vartheta\times(\bm\vartheta\times \b Y \b p^{(2)})]_1 &=  \tilde m^{(2)}_1 ,\qquad & 
\eta^{(2)}_2 [\bm\vartheta\times(\bm\vartheta\times \b Y \b p^{(2)})]_2 &= \tilde  m^{(2)}_2 .
\end{aligned}
\end{equation}
Indeed, for arbitrary polarisation $\b q = c_1 \b q^{(1)}+c_2 \b q^{(2)}$, $c_1 , c_2 \in \R$ the expression $\eta_j [\bm\vartheta\times(\bm\vartheta\times \b Y \b p)]_j$ can be written as a linear combination of the four expressions $\tilde m^{(k)}_j$, $k,j=1,2$:
\begin{align*}
\eta_1 [\bm\vartheta\times(\bm\vartheta\times \b Y \b p )]_1 &=  [c_1 \bm\eta^{(1)}+c_2 \bm\eta^{(2)}]_1 [\bm\vartheta\times(\bm\vartheta\times \b Y (c_1 \b p^{(1)}+c_2 \b p^{(2)}) )]_1\\
&=  c_1^2 \eta^{(1)}_1 [\bm\vartheta\times(\bm\vartheta\times \b Y \b p^{(1)} )]_1 + c_1 c_2 \eta^{(1)}_1 [\bm\vartheta\times(\bm\vartheta\times \b Y \b p^{(2)} )]_1 \\
&\phantom{=} + c_1 c_2 \eta^{(2)}_1 [\bm\vartheta\times(\bm\vartheta\times \b Y \b p^{(1)} )]_1 +c_2^2 \eta^{(2)}_1 [\bm\vartheta\times(\bm\vartheta\times \b Y \b p^{(2)} )]_1 \\
&=  c_1^2 \eta^{(1)}_1 [\bm\vartheta\times(\bm\vartheta\times \b Y \b p^{(1)} )]_1 + c_1 c_2 \eta^{(2)}_1 [\bm\vartheta\times(\bm\vartheta\times \b Y \b p^{(2)} )]_1 \\
&\phantom{=} + c_1 c_2 \eta^{(1)}_1 [\bm\vartheta\times(\bm\vartheta\times \b Y \b p^{(1)} )]_1 +c_2^2 \eta^{(2)}_1 [\bm\vartheta\times(\bm\vartheta\times \b Y \b p^{(2)} )]_1 \\
&= (c_1^2 + c_1 c_2) \tilde m^{(1)}_1 + (c_2^2 + c_1 c_2) \tilde m^{(2)}_1 ,
\end{align*}
and similarly
\[
\eta_2 [\bm\vartheta\times(\bm\vartheta\times \b Y \b p )]_2 = (c_1^2 - c_1 c_2) \tilde m^{(1)}_2 + (c_2^2 - c_1 c_2) \tilde m^{(2)}_2 .
\]
Decomposing $\b Y \b p=\left<\bm\vartheta,\b Y \b p\right>\bm\vartheta+\b P_{\bm\vartheta} \b Y\b p$, where $\b P_{\bm\vartheta}\in\R^{3\times3}$ denotes the orthogonal projection in direction $\vartheta$, and using that
\[ \bm\vartheta\times(\bm\vartheta\times \b Y \b p) = \bm\vartheta\times(\bm\vartheta\times \b P_{\bm\vartheta} \b Y \b p) = \left<\bm\vartheta,\b P_{\bm\vartheta} \b Y \b p\right>\bm\vartheta-\b P_{\bm\vartheta} \b Y \b p = -\b P_{\bm\vartheta} \b Y \b p, \] 
the system of equations \eqref{eqFixed} considering \eqref{eqIncident2} can be written in the form \eqref{eqMeasurementsPolarisationReduced}.
\end{proof}

For $\b Y (\b v)= \tpsi (\b v) + \bchi_0 \bm{\mathcal{K}} [\tpsi ] (\b v) + \bm{\mathcal{K}}^\dagger [\tpsi ] (\b v) \bchi_0$, where $\b v= \tfrac\omega c (\bm\vartheta +\b e_3),$ $\gb\vartheta\in \mathbb{S}^2_+$ ,  
\autoref{thReconstructableMatrixEntries} shows that the data $\tilde m^{(k)}_j (\b v)$ for $k,j=1,2$ and two different polarisation vectors $\b q = \b e_1$ and $\b q =\b e_2$ uniquely determine the projections
$ [\b P_{\bm\vartheta} \b Y \b p^{(k)}]_j$ for $k,j \in\{1,2\}.$ 

Moreover, measurements for additional polarisations $\b q$ do not provide any further informations so that at every detector point, corresponding to a direction $\bm\vartheta\in \mathbb{S}^2_+$, only the four elements $[\b P_{\bm\vartheta} \b Y \b p^{(k)}]_j$, $k,j=1,2$, of the projection influence the measurements.

\begin{remark}
In contrast to standard OCT where three polarisation vectors where needed \cite[Proposition 11]{ElbMinSch15} and to first order Born-approximation where $\b Y = \tpsi ,$ as we are going to see in the following, the above measurements due to the special form of $\b Y$ allow for reconstructing all the unknowns functions $\psi_{ij}. $
\end{remark}

\begin{proposition}
Let $\gb\vartheta\in \mathbb{S}^2_\ast :=\{\gb\mu\in \mathbb{S}^2\mid \mu_1 \neq \mu_2 , \, \mu_3>0\}.$ For two given incident polarisation vectors $\b q^{(1)}$ and $\b q^{(2)},$ the system of equations \eqref{eqMeasurementsPolarisationReduced} is equivalent to a Fredholm type system of integral equations
\begin{align}\label{eqProp1}
(\id + \bm{\mathcal{C}})
\begin{pmatrix}
\tilde{\psi}_{11} \\
\tilde{\psi}_{12} \\
\tilde{\psi}_{22}
\end{pmatrix} &= \b b,
\end{align}
for some compact operator $\bm{\mathcal{C}} : (L^2 (\Omega))^3 \rightarrow (L^2 (\mathbb{S}^2))^3 $ and known right hand side $\b b$ depending on the OCT data. Given the solution of \eqref{eqProp1}, the component $\tilde{\psi}_{33}$ satisfies a Fredholm integral equation of the first kind
\begin{equation}\label{eqProp2}
\mathcal{C} \tilde{\psi}_{33} = b,
\end{equation}
where $\mathcal{C} : L^2 (\Omega) \rightarrow L^2 (\mathbb{S}^2)$ is a compact operator and $b$ depends on the solution of \eqref{eqProp1}.
\end{proposition}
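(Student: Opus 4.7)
The plan is to exploit the orthotropic block structure of $\tpsi$ together with the specific form of the polarisation vectors $\b p^{(1)},\b p^{(2)}$ to decouple $\tilde\psi_{33}$ from the other three unknowns, and then to recognise the remaining $3\times 3$ system as a compact perturbation of the pointwise-in-Fourier-space identity operator, while the last unknown $\tilde\psi_{33}$ is left obeying a purely integral (first-kind) equation.

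First I would expand $\b Y\b p^{(k)}$ componentwise. Because the only non-zero entries of $\tpsi$ sit in the $2\times 2$ upper-left block and in the $(3,3)$-slot, and because $p^{(k)}_3=0$, the scalar $\tilde\psi_{33}$ can enter $\b Y\b p^{(k)}$ only through the third row of $\bm{\mathcal{K}}^\dagger[\tpsi]\bchi_0$, where $(\bm{\mathcal{K}}^\dagger[\tpsi])_{3\ell}(\b v)=\int\tilde\psi_{33}(\b k)\b K^{\b y}_{3\ell}(\b v;\b k)\d\b k$. A short direct calculation then shows that the $\tilde\psi_{33}$-part of $(\b Y\b p^{(k)})_3$ factors as $(p^{(k)}_1+p^{(k)}_2)\mathcal{L}[\tilde\psi_{33}]$ with the single integral operator
\[
\mathcal{L}[\tilde\psi_{33}](\b v) := \chi_0 \int_{\R^3}\tilde\psi_{33}(\b k)\bigl(\b K^{\b y}_{31}+\b K^{\b y}_{32}\bigr)(\b v;\b k)\d\b k,
\]
while the first two components of $\b Y\b p^{(k)}$ contain no $\tilde\psi_{33}$ at all. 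Since $p^{(1)}_1+p^{(1)}_2=p^{(2)}_1+p^{(2)}_2=1$, the identity $[\b P_{\bm\vartheta}\b Y\b p^{(k)}]_j = (\b Y\b p^{(k)})_j-\vartheta_j\langle\bm\vartheta,\b Y\b p^{(k)}\rangle$ shows that $\tilde\psi_{33}$ enters each of the four scalar equations of \eqref{eqMeasurementsPolarisationReduced} only through the common coefficient $-\vartheta_j\vartheta_3\,\mathcal{L}[\tilde\psi_{33}]$, which is independent of $k$.

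Next I would form three linearly independent combinations of the four equations that annihilate $\mathcal{L}[\tilde\psi_{33}]$, for instance $(1,1)-(2,1)$, $(1,2)-(2,2)$ and $\vartheta_2\cdot(1,1)-\vartheta_1\cdot(1,2)$. Each resulting relation is of the schematic form $\bigl(\tpsi(\b v)+\bchi_0\bm{\mathcal{K}}[\tpsi](\b v)+\bm{\mathcal{K}}^\dagger[\tpsi](\b v)\bchi_0\bigr)\b p^{\text{eff}}=\text{(data)}$ and involves only $\tilde\psi_{11},\tilde\psi_{12},\tilde\psi_{22}$. Writing out the leading pointwise part gives a $3\times 3$ scalar linear system for the triple $(\tilde\psi_{11}(\b v),\tilde\psi_{12}(\b v),\tilde\psi_{22}(\b v))$ whose coefficient matrix has determinant proportional to $(\vartheta_1-\vartheta_2)$; it is therefore invertible precisely on $\mathbb{S}^2_\ast$. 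Inverting this $3\times 3$ matrix reformulates the three equations as $(\id+\bm{\mathcal{C}})(\tilde\psi_{11},\tilde\psi_{12},\tilde\psi_{22})^T=\b b$, with $\id$ standing for the evaluation-of-Fourier-transform on the Ewald sphere (well-defined as a bounded operator from $(L^2(\Omega))^3$ to $(L^2(\mathbb{S}^2))^3$ since $\Omega$ is bounded and the frequency is fixed by \autoref{as1}) and $\bm{\mathcal{C}}$ collecting the double-integral contributions coming from $\bchi_0\bm{\mathcal{K}}$ and $\bm{\mathcal{K}}^\dagger\bchi_0$. Compactness of $\bm{\mathcal{C}}$ follows from Hilbert--Schmidt estimates on the smoothed kernels $\b K^{\b y},\b K^{\b z}$ over the bounded set $\Omega$. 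Substituting the now-known triple $(\tilde\psi_{11},\tilde\psi_{12},\tilde\psi_{22})$ back into any one of the original four equations removes every term except the $\mathcal{L}[\tilde\psi_{33}]$-contribution and a completely computable right-hand side, yielding the first-kind Fredholm equation $\mathcal{L}[\tilde\psi_{33}]=b$ as claimed in \eqref{eqProp2}.

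The step I expect to be the main obstacle is the explicit determinant calculation showing that the $3\times 3$ pointwise coefficient matrix is non-singular precisely when $\vartheta_1\neq\vartheta_2$: both the vanishing at $\vartheta_1=\vartheta_2$ (plausibly a manifestation of a residual symmetry between the two circularly polarised illuminations $\b p^{(1)}$ and $\b p^{(2)}$) and non-vanishing elsewhere have to be checked with care because the entries are complex and mix $\vartheta_1,\vartheta_2,\vartheta_3$. A secondary technical point is to make precise the functional-analytic identification under which the pointwise term $\tpsi(\b v)$ really plays the role of the identity operator $\id:(L^2(\Omega))^3\to(L^2(\mathbb{S}^2))^3$, and to control the Green-tensor singularity in $\bm{\mathcal{K}},\bm{\mathcal{K}}^\dagger$ well enough to guarantee compactness of the perturbation $\bm{\mathcal{C}}$.
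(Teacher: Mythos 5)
Your reduction is essentially the paper's own: the same observation that $\tilde{\psi}_{33}$ enters the four equations only through the third row of $\bm{\mathcal{K}}^\dagger [\tpsi ]\,\bchi_0$ with the $k$-independent weight $p^{(k)}_1+p^{(k)}_2=1$; the same three eliminating combinations $(1,1)-(2,1)$, $(1,2)-(2,2)$ and $\vartheta_2\cdot(1,1)-\vartheta_1\cdot(1,2)$; the same $3\times3$ pointwise coefficient matrix, whose determinant the paper computes explicitly as $-\tfrac{\i}{8}(\vartheta_2-\vartheta_1)(\vartheta_1^2+\vartheta_2^2-1)$, nonzero on $\mathbb{S}^2_\ast$ because $\vartheta_1^2+\vartheta_2^2-1=-\vartheta_3^2$; and the same back-substitution into one of the original equations to obtain the first-kind equation $\mathcal{M}_3\,\tilde{\psi}_{33}=b$. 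Your identification of the pointwise term $\tpsi(\b v)$ with a bounded restriction-of-the-Fourier-transform operator from $(L^2(\Omega))^3$ to $(L^2(\mathbb{S}^2))^3$ is also consistent with how the paper treats $\id$.

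The one genuine gap is the compactness argument. The kernels $\b K^{\b y}$, $\b K^{\b z}$ contain the full Green tensor $\greentensor(\omega,\b y-\b z)$, whose $\grad\div \greenkernel$ part behaves like $|\b y-\b z|^{-3}$ near the diagonal; this is strongly singular, not locally integrable, and certainly not square integrable over $\Omega\times\Omega$, so the Hilbert--Schmidt estimate you invoke is not available, and the associated volume potential is in fact only a \emph{bounded} operator on $L^2(\Omega)$, not a compact one. The paper resolves this by factoring $\bm{\mathcal{K}}[\b f](\b v)=\bm{\mathcal{F}}\bigl[(\mathcal{G}+\bm{\mathcal{G}}_0+\bm{\mathcal{G}}_1)[\e^{-\i\frac\omega c z_3}\b f]\bigr](\b v)$, where $\mathcal{G}$ and the $\omega$-dependent correction $\bm{\mathcal{G}}_1$ have weakly singular kernels and are compact, the static part $\bm{\mathcal{G}}_0$ is merely bounded by the result of \cite{ColPaiSyl07}, and the outer far-field-type operator $\bm{\mathcal{F}}:L^2(\Omega)\to L^2(\mathbb{S}^2)$ has a smooth kernel and is compact; compactness of $\bm{\mathcal{K}}$, hence of $\bm{\mathcal{C}}$ and of $\mathcal{C}=\mathcal{M}_3$, then follows because the composition of a bounded operator with a compact one is compact. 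You flag the Green-tensor singularity as a ``secondary technical point,'' but it is exactly the place where the proof needs an idea beyond kernel size estimates.
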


\begin{proof}
In order to reformulate equations \eqref{eqMeasurementsPolarisationReduced}, first we consider an arbitrary vector $\b p$ and we split the expression $\b P_{\bm\vartheta} \b Y \b p$ into the sum
\begin{equation}\label{eqDecomp}
\b P_{\bm\vartheta} \b Y \b p =  (\id - \bm\vartheta \bm\vartheta^T) \tpsi \b p + (\id - \bm\vartheta \bm\vartheta^T) \bchi_0 \bm{\mathcal{K}} [\tpsi] \b p + (\id - \bm\vartheta \bm\vartheta^T) \bm{\mathcal{K}}^\dagger [\tpsi] \, \bchi_0  \b p ,
\end{equation}
omitting for simplicity the $\b v$ dependence of the unknown $\tpsi.$

The first term on the right hand side admits the decomposition
\begin{equation*}
(\id - \bm\vartheta \bm\vartheta^T) \tpsi \b p = 
\begin{pmatrix} 
p_1 (1 - \vartheta_1^2) & - p_1 \vartheta_1 \vartheta_2 + p_2 (1 - \vartheta_1^2) & -p_2 \vartheta_1 \vartheta_2 \\
- p_1 \vartheta_1 \vartheta_2 & - p_2 \vartheta_1 \vartheta_2 + p_1 (1 - \vartheta_2^2) & p_2 (1 - \vartheta_2^2) \\
- p_1 \vartheta_1 \vartheta_3 & - p_1 \vartheta_2 \vartheta_3 - p_2 \vartheta_1 \vartheta_3 & - p_2 \vartheta_2 \vartheta_3 
\end{pmatrix} \begin{pmatrix}
\tilde{\psi}_{11} \\
\tilde{\psi}_{12} \\
\tilde{\psi}_{22}
\end{pmatrix} ,
\end{equation*}
where we observe the independence on $\tilde{\psi}_{33}.$ To analyse the other two terms, we consider \eqref{Operators} and define the operators acting now on the components of the matrix-valued function $\b f:$
\[
\mathcal{K}_{kj}  [f ] (\b v) : =  \int_{\R^3} \b [K^{\b z}]_{kj} (\b v; \b k)  f (\b k) \d \b k , \quad \mathcal{K}_{kj}^\dagger  [f ] (\b v) : =  \int_{\R^3} [K^{\b y}]_{kj} (\b v; \b k)  f (\b k)  \d \b k , \,\, k,j=1,2,3.
\]
Since we are interested only in the first two components of $\b P_{\bm\vartheta} \b Y \b p$ and the calculations are rather lengthy we are going to omit the third component in the following expressions. The second term on the right hand side of \eqref{eqDecomp} reads
\begin{equation*}
(\id - \bm\vartheta \bm\vartheta^T) \bchi_0 \bm{\mathcal{K}} [\tpsi] \b p = \chi_0
\begin{pmatrix} 
p_1 \mathcal{L}_{11}  & p_1 \mathcal{L}_{12} + p_2 \mathcal{L}_{11} & p_2 \mathcal{L}_{12} \\
p_1 \mathcal{L}_{21}  & p_1 \mathcal{L}_{22} + p_2 \mathcal{L}_{21} & p_2 \mathcal{L}_{22} \\
 \ast & \ast & \ast
\end{pmatrix} \begin{pmatrix}
\tilde{\psi}_{11} \\
\tilde{\psi}_{12} \\
\tilde{\psi}_{22}
\end{pmatrix} ,
\end{equation*}
where
\[
\mathcal{L}_{kj} := (1 -\vartheta_k^2 - \vartheta_1 \vartheta_2) (\mathcal K_{1j} +  \mathcal K_{2j}) - \vartheta_k \vartheta_3 \mathcal K_{3j}, \quad k,j=1,2.
\]
The only term where $\tilde{\psi}_{33}$ appears is the last one (as expected), namely
\begin{multline*}
(\id - \bm\vartheta \bm\vartheta^T) \bm{\mathcal{K}}^\dagger [\tpsi] \, \bchi_0  \b p  \\
{=}\chi_0 (p_1 + p_2)
\begin{pmatrix} 
(1- \vartheta_1^2) \mathcal{M}_1 & - \vartheta_1 \vartheta_2 \mathcal{M}_1 + (1- \vartheta_1^2) \mathcal{M}_2 & - \vartheta_1 \vartheta_2 \mathcal{M}_2 & - \vartheta_1 \vartheta_3 \mathcal{M}_3 \\
- \vartheta_1 \vartheta_2 \mathcal{M}_1 & (1- \vartheta_2^2) \mathcal{M}_1 - \vartheta_1 \vartheta_2 \mathcal{M}_2 &   (1- \vartheta_2^2) \mathcal{M}_2 & - \vartheta_2 \vartheta_3 \mathcal{M}_3 \\
 \ast & \ast & \ast & \ast
\end{pmatrix}\!\!\begin{pmatrix}
\tilde{\psi}_{11} \\
\tilde{\psi}_{12} \\
\tilde{\psi}_{22} \\
\tilde{\psi}_{33}
\end{pmatrix},
\end{multline*}
where 
\[
\mathcal{M}_j := \mathcal{K}_{j1}^\dagger + \mathcal{K}_{j2}^\dagger , \quad j=1,2,3.
\]

We can combine now all the above formulas to obtain
\begin{equation*}
\b P_{\bm\vartheta} \b Y \b p = \left( \bm{\mathcal{I}} (\b p) + \chi_0 \bm{\mathcal{L}} (\b p) + \chi_0 (p_1 + p_2 ) \bm{\mathcal{M}}  \right) \b y,
\end{equation*}
where
\begin{equation}\label{matrices}
\begin{aligned}
\bm{\mathcal{I}} (\b p) &=  \begin{pmatrix} 
p_1 (1 - \vartheta_1^2) & - p_1 \vartheta_1 \vartheta_2 + p_2 (1 - \vartheta_1^2) & -p_2 \vartheta_1 \vartheta_2  & 0\\
- p_1 \vartheta_1 \vartheta_2 & - p_2 \vartheta_1 \vartheta_2 + p_1 (1 - \vartheta_2^2) & p_2 (1 - \vartheta_2^2) & 0 \\
\ast & \ast & \ast & \ast 
\end{pmatrix} ,\\
\bm{\mathcal{L}} (\b p) &= \begin{pmatrix} 
p_1 \mathcal{L}_{11}  & p_1 \mathcal{L}_{12} + p_2 \mathcal{L}_{11} & p_2 \mathcal{L}_{12} & 0 \\
p_1 \mathcal{L}_{21}  & p_1 \mathcal{L}_{22} + p_2 \mathcal{L}_{21} & p_2 \mathcal{L}_{22} & 0\\
 \ast & \ast & \ast & \ast
\end{pmatrix} ,\\
\bm{\mathcal{M}} &= \begin{pmatrix} 
(1- \vartheta_1^2) \mathcal{M}_1 & - \vartheta_1 \vartheta_2 \mathcal{M}_1 + (1- \vartheta_1^2) \mathcal{M}_2 & - \vartheta_1 \vartheta_2 \mathcal{M}_2 & - \vartheta_1 \vartheta_3 \mathcal{M}_3 \\
- \vartheta_1 \vartheta_2 \mathcal{M}_1 & (1- \vartheta_2^2) \mathcal{M}_1 - \vartheta_1 \vartheta_2 \mathcal{M}_2 &   (1- \vartheta_2^2) \mathcal{M}_2 & - \vartheta_2 \vartheta_3 \mathcal{M}_3 \\
 \ast & \ast & \ast & \ast
\end{pmatrix},
\end{aligned}
\end{equation}
and
\[
\b y = \begin{pmatrix} \tilde{\psi}_{11} \,\,
\tilde{\psi}_{12} \,\,
\tilde{\psi}_{22} \,\,
\tilde{\psi}_{33}
\end{pmatrix}^T .
\]

Then, the system of equations \eqref{eqMeasurementsPolarisationReduced}, considering \eqref{eqIncident2} reads
\begin{subequations}\label{eqReduced}
\begin{align}
[( \bm{\mathcal{I}} (\b p^{(1)}) + \chi_0 \bm{\mathcal{L}} (\b p^{(1)}) + \chi_0  \bm{\mathcal{M}}  ) \b y]_1 &= b^{(1)}_1 , \label{eqReduced1}\\
[( \bm{\mathcal{I}} (\b p^{(1)}) + \chi_0 \bm{\mathcal{L}} (\b p^{(1)}) + \chi_0  \bm{\mathcal{M}}  ) \b y]_2 &= b^{(1)}_2 , \label{eqReduced2}\\
[( \bm{\mathcal{I}} (\b p^{(2)}) + \chi_0 \bm{\mathcal{L}} (\b p^{(2)}) + \chi_0  \bm{\mathcal{M}}  ) \b y]_1 &= b^{(2)}_1 , \label{eqReduced3}\\
[( \bm{\mathcal{I}} (\b p^{(2)}) + \chi_0 \bm{\mathcal{L}} (\b p^{(2)}) + \chi_0  \bm{\mathcal{M}}  ) \b y]_2 &= -b^{(2)}_2 .\label{eqReduced4}
\end{align}
\end{subequations}

We observe that in all equations the coefficient in front of the operator $\bm{\mathcal{M}}$ is the same, which is the only operator applying on the fourth component of $\b y.$ In addition, from \eqref{matrices}, we see that $\vartheta_2 \bm{\mathcal{M}}_{14} = \vartheta_1 \bm{\mathcal{M}}_{24}.$ Thus, in order to eliminate $y_4$ we reformulate the above system as follows: we subtract from equation \eqref{eqReduced1} the equation \eqref{eqReduced3}, from equation \eqref{eqReduced2} the equation \eqref{eqReduced4} and from $\vartheta_2 \cdot$\eqref{eqReduced1} the equation $\vartheta_1 \cdot$\eqref{eqReduced2}, resulting to
\begin{subequations}\label{eqReducedb}
\begin{align*}
[( \bm{\mathcal{I}} (\b p^{(1)} -\b p^{(2)} ) + \chi_0 \bm{\mathcal{L}} (\b p^{(1)}-\b p^{(2)})   ) \b y]_1 &= b^{(1)}_1 - b^{(2)}_1, \\
[( \bm{\mathcal{I}} (\b p^{(1)} -\b p^{(2)} )+ \chi_0 \bm{\mathcal{L}} (\b p^{(1)}-\b p^{(2)})   ) \b y]_2 &= b^{(1)}_2 + b^{(2)}_2, \\
\vartheta_2 [( \bm{\mathcal{I}} (\b p^{(1)}) + \chi_0 \bm{\mathcal{L}} (\b p^{(1)}) + \chi_0  \bm{\mathcal{M}}  ) \b y]_1 \\
- \vartheta_1 [( \bm{\mathcal{I}} (\b p^{(1)}) + \chi_0 \bm{\mathcal{L}} (\b p^{(1)}) + \chi_0  \bm{\mathcal{M}}  ) \b y]_2 &= \vartheta_2 b^{(1)}_1 - \vartheta_1 b^{(1)}_2 .
\end{align*}
\end{subequations}

The above system in compact form reads
\begin{equation}\label{eqLinear}
 (\bm{\tilde{\mathcal{I}}} + \bm{\mathcal{N}} ) \bm{\tilde{y}} = \bm{\tilde{b}},
\end{equation}
where
\begin{equation*}
\begin{aligned}
\bm{\tilde{\mathcal{I}}} &=  \frac{\i}2 \begin{pmatrix} 
2(\vartheta_1^2 -1) & 2 (1+ \vartheta_1 \vartheta_2 -\vartheta_1^2 )  & -2\vartheta_1 \vartheta_2 \\
 2\vartheta_1 \vartheta_2 &  2(\vartheta_2^2 -\vartheta_1 \vartheta_2 - 1 )  &  2(1-\vartheta_2^2 )  \\
-\vartheta_2 (1+\i) & \vartheta_1 (\i +1) + \vartheta_2 (1-\i) & -\vartheta_1 (1-\i) 
\end{pmatrix} ,\\
\bm{\mathcal{N}} &= \i \chi_0  \begin{pmatrix} 
 -\mathcal{L}_{11}  &  \mathcal{L}_{11} - \mathcal{L}_{12} &  \mathcal{L}_{12} \\
 -\mathcal{L}_{21}  & \mathcal{L}_{21} - \mathcal{L}_{22} &  \mathcal{L}_{22} \\
 \phantom{-}\mathcal{N}_1 & \mathcal{N}_2 & \mathcal{N}_3 
\end{pmatrix}, \\
\bm{\tilde{y}}  &=  \begin{pmatrix}
y_1 \\ y_2 \\ y_3
\end{pmatrix}, \quad \bm{\tilde{b}} = \begin{pmatrix}
b^{(1)}_1 - b^{(2)}_1 \\ b^{(1)}_2 + b^{(2)}_2 \\ \vartheta_2 b^{(1)}_1 - \vartheta_1 b^{(1)}_2
\end{pmatrix} ,
\end{aligned}
\end{equation*}
and
\begin{align*}
\mathcal{N}_1 &:= \tfrac12  [(1+\i) 
(\vartheta_1 \vartheta_2\mathcal{L}_{21} -\vartheta_2^2 \mathcal{L}_{11} )-2\i \vartheta_2 \mathcal{M}_1 ]  ,\\
\mathcal{N}_2 &:= \tfrac12 [(1-\i)(\vartheta^2_2 \mathcal{L}_{11} - \vartheta_1 \vartheta_2 \mathcal{L}_{21}) - (1+\i) (\vartheta^2_2 \mathcal{L}_{12} - \vartheta_1 \vartheta_2 \mathcal{L}_{22}) -2\i \vartheta_2 \mathcal{M}_2 + 2\i \vartheta_1 \mathcal{M}_1 ] ,\\
\mathcal{N}_3 &:= \tfrac12  [(1-\i) 
 (\vartheta_2^2 \mathcal{L}_{12} -\vartheta_1 \vartheta_2 \mathcal{L}_{22}) +2\i \vartheta_1 \mathcal{M}_2 ] .
\end{align*}
We compute the determinant of $\bm{\tilde{\mathcal{I}}}$ which is given by
\begin{align*}
\det (\bm{\tilde{\mathcal{I}}}) &=  -\tfrac{\i}8 \left( -\vartheta_1^3 + \vartheta_1^2 \vartheta_2 - \vartheta_1 \vartheta_2^2 +\vartheta_1 + \vartheta_2^3 - \vartheta_2 \right) \\
&= -\tfrac{\i}8 (\vartheta_2 - \vartheta_1) (\vartheta_1^2 + \vartheta_2^2 -1).
\end{align*}
Recall that $\bm\vartheta\in \mathbb{S}^2_+ ,$ meaning $\vartheta_3 >0.$ Then, if in addition we impose that $\vartheta_1 \neq \vartheta_2 $ for all $\bm\vartheta\in \mathbb{S}^2_+ ,$ the matrix $\bm{\tilde{\mathcal{I}}}$ is invertible with 
$
\bm{\tilde{\mathcal{I}}}^{-1} = \det (\bm{\tilde{\mathcal{I}}})^{-1} \mbox{adj}  (\bm{\tilde{\mathcal{I}}}).
$
Then, equation \eqref{eqLinear} can be written in the form
\begin{equation}\label{eqLinearFinal}
 (\id + \bm{\tilde{\mathcal{I}}}^{-1}\bm{\mathcal{N}} ) \bm{\tilde{y}} = \bm{\tilde{\mathcal{I}}}^{-1}\bm{\tilde{b}},
\end{equation}
which is the Fredholm integral equation of the second kind \eqref{eqProp1}, for $\bm{\mathcal{C}} := \bm{\tilde{\mathcal{I}}}^{-1}\bm{\mathcal{N}} ,$ and $\b b := \bm{\tilde{\mathcal{I}}}^{-1}\bm{\tilde{b}}.$ 
Once \eqref{eqLinearFinal} is solved 
for $y_1 ,y_2$ and $y_3$ we can choose one of the four equations from the system \eqref{eqReduced} resulting to a Fredholm integral equation of the first kind for the unknown $y_4$  now:
\begin{equation*}
\mathcal{M}_3 y_4 = b,
\end{equation*}
for some known function $b,$ depending on $\bm{\tilde{y}}$ and $\bm{\tilde{b}}.$ This is equation \eqref{eqProp2} for $\mathcal{C} := \mathcal{M}_3 .$

To see the compactness of the integral operator $\bm{\mathcal{K}}$, we go back to the definition \eqref{Operators} and we consider the following decomposition:
\begin{equation*}
\begin{aligned}
\bm{\mathcal{K}} [\b f ] (\tfrac\omega c (\bm\vartheta +\b e_3))  &= \frac{\omega^2 }{c^2(2\pi)^3} \int_{\R^3}\int_{\Omega} \int_{\Omega}  \greentensor (\omega, \b y-\b z)\e^{-\i\frac\omega c (z_3 + \left<\bm\vartheta,\b y\right>)} \e^{\i\left<\b k,\b z\right>} \bm{\tilde{f}} (\b k) \d \b z\d \b y \d \b k  \\
&= \frac{\omega^2 }{c^2} \int_{\Omega} \int_{\Omega}  \greentensor (\omega, \b y-\b z)\e^{-\i\frac\omega c (z_3 + \left<\bm\vartheta,\b y\right>)}  \b f (\b z) \d \b z\d \b y  \\
&= \int_{\Omega} \e^{-\i\frac\omega c \left<\bm\vartheta,\b y\right>}  \left(\frac{\omega^2}{c^2} \id\int_{\Omega}\greenkernel (\omega, \b y-\b z)  \e^{-\i\frac\omega c z_3} \b f (\b z) \d \b z \right.\\
&\phantom{=}\left.+\grad\div\int_{\Omega}\greenkernel (\omega, \b y-\b z)  \e^{-\i\frac\omega c z_3} \b f (\b z) \d \b z \right)\d \b y  \\
&= \int_{\Omega} \e^{-\i\frac\omega c \left<\bm\vartheta,\b y\right>}  \left(\frac{\omega^2}{c^2} \id\int_{\Omega}\greenkernel (\omega, \b y-\b z)  \e^{-\i\frac\omega c z_3} \b f (\b z) \d \b z \right.\\
&\phantom{=}\left.+\grad\div\int_{\Omega}\greenkernel (0, \b y-\b z)  \e^{-\i\frac\omega c z_3} \b f (\b z) \d \b z 
\right.\\
&\phantom{=}\left.+\grad\div\int_{\Omega}\left(\greenkernel (\omega, \b y-\b z) - \greenkernel (0, \b y-\b z)\right) \e^{-\i\frac\omega c z_3} \b f (\b z) \d \b z  \right)\d \b y .
\end{aligned}
\end{equation*}
The above expression in compact form reads
\[
\bm{\mathcal{K}} [\b f ] (\b v) = \bm{\mathcal{F}} \left[ (\mathcal{G} +\bm{\mathcal{G}}_0+\bm{\mathcal{G}}_1) [\e^{-\i\frac\omega c z_3}\b f] \right] (\b v),
\]
for the operators
\begin{equation*}
\begin{aligned}
\bm{\mathcal{F}} [ f ] (\bm\theta) &:= \int_{\Omega} \e^{-\i\frac\omega c \left<\bm\vartheta,\b y\right>}   f (\b y) \d \b y ,\\
\mathcal{G} [ f ] (\b x) &:= \frac{\omega^2}{c^2} \int_{\Omega}\greenkernel (\omega, \b x-\b y)  f (\b y) \d \b y,\\
\bm{\mathcal{G}}_0 [\b f ] (\b x) &:= \grad\div \int_{\Omega}\greenkernel (0, \b x-\b y) \b f (\b y) \d \b y,\\
\bm{\mathcal{G}}_1 [\b f ] (\b x) &:= \grad\div \int_{\Omega}\left(\greenkernel (\omega, \b x-\b y) - \greenkernel (0, \b x-\b y)\right) \b f (\b y) \d \b y .
\end{aligned}
\end{equation*}
The operator $\bm{\mathcal{F}}: L^2 (\Omega) \rightarrow L^2 (\mathbb{S}^2)$ is a modification of the usual far-field operator with smooth kernel thus compact. The operators $\mathcal{G}: L^2 (\Omega) \rightarrow L^2 (\Omega)$ and $\bm{\mathcal{G}}_1 : (L^2 (\Omega))^{3\times 3} \rightarrow (L^2 (\Omega))^{3\times 3}$ are also compact due to their weakly singular kernels, see for instance \cite{ColKre98, Pot00}, and the operator $\bm{\mathcal{G}}_0 : (L^2 (\Omega))^{3\times 3} \rightarrow (L^2 (\Omega))^{3\times 3}$ is bounded \cite{ColPaiSyl07}. Thus $\bm{\mathcal{K}} 
: (L^2 (\Omega))^{3\times 3} \rightarrow (L^2 (\mathbb{S}^2))^{3\times 3}$
is also compact. The same arguments hold for $\bm{\mathcal{K}}^\dagger $ and then we can consider these properties also for the operators acting on the components of the matrix-valued function.
\end{proof}

\begin{remark}
Equation \eqref{eqProp2} reflects the ill-posedness of the inverse problem, due to the compactness of the integral operator. 
\end{remark}

\section{Conclusions}
In this work we have formulated the inverse problem of recovering the electric susceptibility of a non-magnetic, inhomogeneous orthotropic medium, placed in a polarized-sensitive Optical Coherence Tomograph, as a system of Fredholm integral equations (both of first and second kind). Under the assumptions of a non-dispersive, weakly scattering medium with small background variations we have shown that we can reconstruct all the coefficients of the matrix-valued susceptibility, given the data for two different incident polarization vectors.


\section*{Acknowledgement}
The work of OS has been supported by the Austrian Science Fund (FWF), Project P26687-N25 (Interdisciplinary Coupled Physics Imaging).




\begin{thebibliography}{10}

\bibitem{BekUma89}
B.~Beker and K.~R. Umashankar.
\newblock Analysis of electromagnetic scattering by arbitrarily shaped
  two-dimensional anisotropic objects: combined field surface integral equation
  formulation.
\newblock {\em Electromagnetics}, 9(2):215--229, 1989.

\bibitem{BouTea02}
B.~E. Bouma and G.~J. Tearney.
\newblock {\em Handbook of Optical Coherence Tomography}.
\newblock Marcel Dekker, Inc., 2002.

\bibitem{CakCol14}
F.~Cakoni and D.~Colton.
\newblock {\em A Qualitative Approach to Inverse Scattering Theory}.
\newblock 188. Springer US, 2014.

\bibitem{CakColMonSun10}
F.~Cakoni, D.~Colton, P.~Monk, and J.~Sun.
\newblock The inverse electromagnetic scattering problem for anisotropic media.
\newblock {\em Inverse Problems}, 26(7):74004--74018, 2010.

\bibitem{CakHad07}
F.~Cakoni and H.~Haddar.
\newblock A variational approach for the solution of the electromagnetic
  interior transmission problem for anisotropic media.
\newblock {\em Inverse Problems and Imaging}, 1(3):443--456, 2007.

\bibitem{Che90}
W.~Chew.
\newblock {\em Waves and Fields in Inhomogeneous Media}.
\newblock Van Nostrand Reinhold, 1990.

\bibitem{ColKre98}
D.~Colton and R.~Kress.
\newblock {\em Inverse acoustic and electromagnetic scattering theory},
  volume~93 of {\em Applied Mathematical Sciences}.
\newblock Springer-Verlag, 2nd edition, 1998.

\bibitem{ColKreMon97}
D.~Colton, R.~Kress, and P.~Monk.
\newblock Inverse scattering from an orthotropic medium.
\newblock {\em Journal of Computational and Applied Mathematics},
  81(2):269--298, 1997.

\bibitem{ColPaiSyl07}
D.~Colton, L.~P{ä}iv{ä}rinta, and J.~Sylvester.
\newblock The interior transmission problem.
\newblock {\em Inverse Problems and Imaging}, 1(1):13--28, 2007.

\bibitem{BoeMilGemNel97}
J.~F. de~Boer, T.~E. Milner, M.~J.~C. Gemert, and J.~S. Nelson.
\newblock Two-dimensional birefringence imaging in biological tissue by
  polarization-sensitive optical coherence tomography.
\newblock {\em Optics Letters}, 22:934--936, 1997.

\bibitem{BoeSriMalCheNel98}
J.~F. de~Boer, S.~M. Srinivas, A.~Malekafzali, Z.~Chen, and J.~S. Nelson.
\newblock Imaging thermally damaged tissue by polarization sensitive optical
  coherence tomography.
\newblock {\em Optics Express}, 3:212--218, 1998.

\bibitem{DreFuj15}
W.~Drexler and J.~G. Fujimoto.
\newblock {\em Optical Coherence Tomography: Technology and Applications}.
\newblock Springer International Publishing, 2 edition, 2015.

\bibitem{ElbMinSch15}
P.~Elbau, L.~Mindrinos, and O.~Scherzer.
\newblock Mathematical methods of optical coherence tomography.
\newblock In O.~Scherzer, editor, {\em Handbook of Mathematical Methods in
  Imaging}, pages 1169--1204. Springer New York, 2015.

\bibitem{GenWuLi03}
Y.~L. Geng, X.~B. Wu, and L.~W. Li.
\newblock Analysis of electromagnetic scattering by a plasma anisotropic
  sphere.
\newblock {\em Radio Science}, 38(6):1104--1116, 2003.

\bibitem{GerBur75}
A.~Gerrard and J.~M. Burch.
\newblock {\em Introduction to Matrix Methods in Optics}.
\newblock Dover Publications, 1975.

\bibitem{GraUsl84}
R.~D. Graglia and P.~L.~E. Uslenghi.
\newblock Electromagnetic scattering from anisotropic material part i: general
  theory.
\newblock {\em IEEE Transactions on Antennas and Propagation}, 32(8):867--869,
  1984.

\bibitem{Had04}
H.~Haddar.
\newblock The interior transmission problem for anisotropic {M}axwell's
  equations and its applications to the inverse problem.
\newblock {\em Mathematical Methods in the Applied Sciences}, 27:2111--2129,
  2004.

\bibitem{HazLen96}
C.~Hazard and M.~Lenoir.
\newblock On the solution of time-harmonic scattering problems for maxwell's
  equations.
\newblock {\em SIAM Journal on Mathematical Analysis}, 27(6):1597--1630, 1996.

\bibitem{HeeHuaSwaFuj92}
M.~R. Hee, D.~Huang, E.~A. Swanson, and J.~G. Fujimoto.
\newblock Polarization-sensitive low-coherence reflectometer for birefringence
  characterization and ranging.
\newblock {\em Journal of the Optical Society of America B}, 9(6):903--908,
  1992.

\bibitem{HitGotStiPirFer01}
C.~K. Hitzenberger, E.~G{ö}tzinger, M.~Sticker, M.~Pircher, and A.~F. Fercher.
\newblock Measurement and imaging of birefringence and optic axis orientation
  by phase resolved polarization sensitive optical coherence tomography.
\newblock {\em Optics Express}, 9(13):780--790, 2001.

\bibitem{JiaWan02}
S.~Jiao and L.~V. Wang.
\newblock Two-dimensional depth-resolved mueller matrix of biological tissue
  measured with double-beam polarization-sensitive optical coherence
  tomography.
\newblock {\em Optics Letters}, 27(2):101--103, 2002.

\bibitem{PapUzuCap90}
S.~N. Papadakis, N.~K. Uzunoglou, and C.~N. Capsalis.
\newblock Scattering of a plane wave by a general anisotropic dielectric
  ellipsoid.
\newblock {\em Journal of the Optical Society of America A}, 7(6):991--997,
  1990.

\bibitem{Pap62}
A.~Papoulis.
\newblock {\em The Fourier Integral and its Applications}.
\newblock McGraw-Hill, 1962.

\bibitem{Pot99}
R.~Potthast.
\newblock Electromagnetic scattering from an orthotropic medium.
\newblock {\em Journal of Integral Equations and Applications}, 11:197--215,
  1999.

\bibitem{Pot00}
R.~Potthast.
\newblock Integral equation methods in electromagnetic scattering from
  anisotropic media.
\newblock {\em Mathematical Methods in the Applied Sciences}, 23:1145--1159,
  2000.

\bibitem{SchoColMaiSil98}
K.~Schoenenberger, B.~W. Colston, D.~J. Maitland, L.~B. Da~Silva, and M.~J.
  Everett.
\newblock Mapping of birefringence and thermal damage in tissue by use of
  polarization-sensitive optical coherence tomography.
\newblock {\em Applied Optics}, 37(25):6026--6036, 1998.

\bibitem{Tor02}
S.~Torquato.
\newblock {\em Random Heterogeneous Materials}.
\newblock Interdisciplinary Applied Mathematics. Springer, 2002.

\bibitem{WolFol89}
E.~Wolf and J.~T. Foley.
\newblock Scattering of electromagnetic fields of any state of coherence from
  space-time fluctuations.
\newblock {\em Physical Review A}, 40:579--587, 1989.

\end{thebibliography}


\end{document}